\theoremstyle{definition}
\newtheorem{thm}{Theorem}[section]
\newtheorem{Def}[thm]{Definition}
\newtheorem{pro}[thm]{Proposition}
\newtheorem{lem}[thm]{Lemma}
\newtheorem{ex}[thm]{Example}
\newtheorem{rem}[thm]{Remark}
\theoremstyle{definition}
\def\c#1{\mathcal {#1}}
\def\m#1{\mathbb {#1}}
\def\r#1{\rm {#1}}
\begin{document}

\title{Fundamental group of finite von Neumann algebras with finite dimensional normal trace space}
\author{Takashi Kawahara}
\address[Takashi Kawahara]{Doctoral student of Mathematics, 
Kyushu University, Ito, 
Fukuoka, 819-0395,  Japan}
\email{t-kawahara@math.kyushu-u.ac.jp}      
\maketitle
\begin{abstract}
We introduce the fundamental group $F(\c{M})$ of 
a finite von Neumann algebra $\c{M}$ with finite dimensional normal trace space.  
The form of $F(\c{M})$ is completely determined.  
Moreover, there exists a finite von Neumann algebra $\c{M}$ with finite dimensional normal trace space such that $F(\c{M})=G$
for any conceivable groups $G$ as fundamental group.  
\end{abstract}

\section{Introduction}
  We recall some facts of fundamental groups of operator algebras.  \\  
The fundamental group $F(M)$ of a ${\rm II}_1$-factor $M$ 
with a normalized trace $\tau$ is defined by Murray and von Neumann in \cite{MN}.  
In the paper, the fact that if $M$ is  hyperfinite, then 
$F(M) = {\mathbb R_+^{\times}}$ is shown, where $\mathbb{R}_+^{\times}$ is the set of positive invertible real numbers.   
It was proved that $F(L(\mathbb{F}_{\infty}))$ of the group factor 
of the free group $\mathbb{F}_{\infty}$ contains the positive rationals by Voiculescu in \cite{Vo} and 
it was shown that 
$F(L(\mathbb{F}_{\infty})) = {\mathbb R}_+^{\times}$ by Radulescu in 
\cite{RF}.  The fact that $F(L(G))$ is a countable group if $G$ is an ICC group with property (T) is shown by Connes \cite{Co}.  That either countable subgroup of $\mathbb R_+^{\times}$ or any uncountable group belonging to a certain "large" class 
can be realized as the fundamental group of some factor of type $\r{II}_1$ is shown by Popa and Vaes in \cite{Po} and in \cite{PoVa}.  

\ Nawata and Watatani \cite{NY}, \cite{NY2} introduced the fundamental group of simple $C^*$-algebras 
with unique trace.  Their study is essentially based on the computation  of 
Picard groups by Kodaka \cite{kod1}, \cite{kod2}, \cite{kod3}.  
Nawata defined the fundamental group of non-unital $C^{*}$-algebras \cite{N1} and calculated the Picard group of some projectionless $C^{*}$-algebras with strict comparison by fundamental group \cite{N2}.
In this paper, we define the fundamental group of $C^*$-algebras 
with finite dimensional trace space.  
This fundamental group is a "numerical invariant".  
Let $\c{M}$ be a finite von Neumann algebra $\c{M}$ with finite dimensional trace space.  
We define the fundamental group $F(\c{M})$ of $\c{M}$  by using self-similarity and the extremal points of the trace state space of $\c{M}$.  
Then $F(\c{M})$ is a 
multiplicative subgroup of $GL_{n}(\m{R})$.  
We shall determine the form of $F(\c{M})$ completely.  
As in the same with the case of the fundamental group of $C^*$-algebras with finite dimensional trace space,  
if the finite von Neumann algebras $\c{M}$ and $\c{N}$ with finite dimensional normal trace space are Morita equivalent, then $F(\c{M})=(DU(\sigma))^{-1}F(\c{N})(DU(\sigma))$ for some diagonal matrix $D$ and for some permutation unitary $U(\sigma)$.  
Popa showed that there exists a ${\rm II}_{1}$-factor $\c{M}_{i}$ such that $F(\c{M}_{i})=G_{i}$ and that $\c{M}_{i}$ is not stably isomorphic each other in \cite{Po}.
By using the fact, we will show that there exists a finite von Neumann algebra $\c{M}$ with finite dimensional normal trace space such that $F(\c{M})=G$
for any conceivable groups $G$ as fundamental group.  
Let $\c{A}$ be a unital $C^{*}$-algebra with 2-dimensional trace space.  
Put $\varphi=\cfrac{1}{2}(\varphi_{1}+\varphi_{2})$, where $\varphi_{1}$ and $\varphi_{2}$ are extreme points of $T(\c{A})$.  
Then $\overline{\pi_{\varphi}(\c{A})}^{w}$ is a finite von Neumann algebra with finite dimensional normal trace space and $F(\c{A})\subset F(\overline{\pi_{\varphi}(\c{A})}^{w})$.  
\\

\section{finite von Neumann algebra and trace space}
We recall some facts of von Neumann algebra and its trace space.  
Let $\c{M}$ be a finite von Neumann algebra.  
We denote by $T(\c{M})$ the set of tracial states of $\c{M}$ and by ${\rm lin}_{\m{C}}T(\c{M})$ the $\m{C}$-linear span of $T(\c{M})$.  
Similarly, we denote by $NT(\c{M})$ the set of normal tracial states of $\c{M}$ and by ${\rm lin}_{\m{C}}NT(\c{M})$ the $\m{C}$-linear span of $NT(\c{M})$.  
Moreover, we denote by $\c{Z}(\c{M})$ the center of $\c{M}$.  
\begin{lem}\label{lem:finite}
Let $\c{M}$ be a finite von Neumann algebra.  
Then the following conditions are equivalent.  \\
(1) $\c{M}$ is a finite direct sum of finite factors.  \\
(2) ${\rm dim}\c{Z}(\c{M})<\infty$ \\
(3) ${\rm dim}({\rm lin}_{\m{C}}T(\c{M}))<\infty $ \\
(4) ${\rm dim}({\rm lin}_{\m{C}}NT(\c{M}))<\infty $
\end{lem}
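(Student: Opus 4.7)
My plan is to prove the cycle $(1) \Rightarrow (3) \Rightarrow (4) \Rightarrow (2) \Rightarrow (1)$. The implications $(1) \Leftrightarrow (2)$ rest on the standard decomposition of a finite-dimensional abelian von Neumann algebra; $(1) \Rightarrow (3)$ uses uniqueness of the tracial state on a finite factor; $(3) \Rightarrow (4)$ is trivial. The substantial step is $(4) \Rightarrow (2)$, for which I will invoke the center-valued trace, and I expect this to be the main obstacle.

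For $(1) \Rightarrow (2)$: if $\c{M} = \bigoplus_{i=1}^{n} M_{i}$ is a finite direct sum of finite factors, then $\c{Z}(\c{M}) = \m{C}^{n}$. For $(2) \Rightarrow (1)$: a finite-dimensional abelian von Neumann algebra is spanned by its finitely many minimal projections $z_{1}, \dots, z_{n}$ (which sum to $1$), whence $\c{M} = \bigoplus_{i} \c{M} z_{i}$, each summand has center $\m{C} z_{i}$ and is therefore a factor, with finiteness inherited from $\c{M}$. For $(1) \Rightarrow (3)$: a finite factor has a unique tracial state---the normalized matrix trace for $M_{n}(\m{C})$ and the Murray--von Neumann trace for $\r{II}_{1}$-factors, uniqueness in the latter case holding even among not-necessarily-normal tracial states because the norm closure of the commutators equals the kernel of the trace. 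Consequently every $\tau \in T(\c{M})$ is a convex combination of the $n$ canonical traces on the summands, so $\dim\, {\rm lin}_{\m{C}} T(\c{M}) = n$. The inclusion $NT(\c{M}) \subseteq T(\c{M})$ immediately gives $(3) \Rightarrow (4)$.

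For the main step, I prove the contrapositive of $(4) \Rightarrow (2)$: if $\dim \c{Z}(\c{M}) = \infty$, then $\dim\, {\rm lin}_{\m{C}} NT(\c{M}) = \infty$. Here I use the unique normal faithful center-valued trace $E : \c{M} \to \c{Z}(\c{M})$ satisfying $E(xy) = E(yx)$, available on any finite von Neumann algebra. For each normal state $\omega$ on $\c{Z}(\c{M})$, the composition $\omega \circ E$ is a normal tracial state on $\c{M}$; by Jordan decomposition every element of $\c{Z}(\c{M})_{*}$ is a complex linear combination of such normal states. The resulting linear map $\c{Z}(\c{M})_{*} \to {\rm lin}_{\m{C}} NT(\c{M})$, $\omega \mapsto \omega \circ E$, is injective: if $\omega \circ E = 0$, then restricting to $\c{Z}(\c{M})$ (where $E$ is the identity) yields $\omega = 0$. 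Since $\dim \c{Z}(\c{M}) = \infty$ forces $\dim \c{Z}(\c{M})_{*} = \infty$, we conclude $\dim\, {\rm lin}_{\m{C}} NT(\c{M}) = \infty$, as required. The delicate ingredient is the existence and normality of the center-valued trace on a possibly non-$\sigma$-finite finite von Neumann algebra, which is classical.
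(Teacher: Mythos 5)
Your proposal is correct and follows essentially the same route as the paper: the easy implications are handled identically, and the key step $(4)\Rightarrow(2)$ is proved in both cases by composing the normal center-valued trace with normal functionals on $\c{Z}(\c{M})$ (the paper phrases this via $L^{1}(X,\mu)$ for $\c{Z}(\c{M})\cong L^{\infty}(X,\mu)$, you via the abstract predual $\c{Z}(\c{M})_{*}$, which is the same argument). Your version is in fact slightly more careful, since you make explicit the injectivity of $\omega\mapsto\omega\circ E$ and the uniqueness of the trace on a finite factor among all (not just normal) tracial states.
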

\begin{proof}
It is clear that (1) and (2) are equivalent, (1) implies (3).  
Since ${\rm lin}_{\m{C}}T(\c{M})$ is a subspace of ${\rm lin}_{\m{C}}NT(\c{M})$, (3) implies (4).  
We shall show that (4) implies (2).  
We assume that (4).  
Let $(X,\mu)$ be a locally finite measure space such that $\c{Z}(\c{M})\cong L^{\infty}(X,\mu)$.  
On the contrary, we assume that ${\rm dim}\c{Z}(\c{M})=\infty$.  
Since $L^{\infty}(X,\mu)$ is isomorphic to the dual space of $L^{1}(X,\mu)$, $L^{1}(X,\mu)$ is not
finite dimensional.  
Let $\theta$ be a unique center-valued trace of $\c{M}$.  
Then $\set{i(f)\circ \theta:f\in L^{1}(X,\mu)}$ is a subspace of ${\rm lin}_{\m{C}}NT(\c{M})$, 
where $i$ is a canonical injection from $L^{1}(X,\mu)$ into the dual space of $L^{\infty}(X,\mu)$.  
Since $L^{1}(X,\mu)$ is not finite dimensional, $\set{i(f)\circ \theta:f\in L^{1}(X,\mu)}$ is not finite dimensional.  
Hence ${\rm lin}_{\m{C}}NT(\c{M})$ is not finite dimensional.  
\end{proof}
 
Especially, if $\c{M}$ is a finite direct sum of finite factors, 
then $T(\c{M})$=$NT(\c{M})$ and ${\rm lin}_{\m{C}}T(\c{M})={\rm lin}_{\m{C}}NT(\c{M})$, 
because every tracial state of finite factors is normal.  

\section{equivalence bimodule of von Neumann algebras}
We recall some definitions of equivalence bimodule of von Neumann algebras (See \cite{R1}).  
Let $\c{M}$ and $\c{N}$ be von Neumann algebras.  
A linear space $\c{X}$ is called a $right\ (left)\ Hermitian\ \c{N}$-$rigged\ \c{M}$-$module$ 
if $\c{X}$ is a right(left) Hilbert $C^{*}$-module over $\c{N}$ with a left (right) action of $\c{M}$ 
by means of a ${}^{*}$-homomorphism of $\c{M}\rightarrow \c{L}_{\c{N}}(\c{X})$ such that $\c{MX}(\c{XM})$ is dense in $\c{X}$ with respect to a norm on $\c{X}$.  
We say a Hermitian right (left) $\c{N}$-rigged $\c{M}$-module is normal if the map $m\mapsto \braket{x,my}_{\c{N}}$ ($m\mapsto {}_{\c{N}}\braket{xm,y}$) is weakly continuous 
for any $x,y$ in $\c{X}$.  
We say $\c{X}$ is a $\c{M}$-$\c{N}$ equivalence bimodule if $\c{X}$ is a normal right $\c{N}$-rigged $\c{M}$-module with a right $\c{N}$-valued inner product $\braket{\ ,\ }_{\c{N}}$ 
and left $\c{M}$-rigged $\c{N}$-module with a left $\c{M}$-valued inner product ${}_{\c{M}}\braket{\ ,\ }$ and satisfies the following conditions; ${}_{\c{M}}\braket{x,y}z=x\braket{y,z}_{\c{N}}$, a linear span of $\{{}_{\c{M}}\braket{x,y}: x,y\in \c{X} \}$ and $\{\braket{x,y}_{\c{N}}: x,y\in \c{X} \}$ are weakly dense in $\c{M}$ and $\c{N}$ respectively.   
We say a $\c{M}$-$\c{N}$ equivalence bimodule $\c{X}$ is $finitely\ generated$ if $\c{X}$ has a finite left and a finite right basis.   
Let $\c{X}$ and $\c{Y}$ be ${\mathcal M}$-${\mathcal N}$ equivalence bimodules.  
We say that $\c{X}$ is isomorphic to $\c{Y}$ as a equivalence bimodule 
if there exists a linear bijective map $\Phi$ from $\c{X}$ onto $\c{Y}$ 
with the properties such that $\Phi(a\xi b)=a\Phi(\xi)b$, 
${}_{\c{M}}\langle \Phi(\xi),\ \Phi(\eta)\rangle 
={}_{\c{M}}\langle \xi,\eta\rangle$ 
and $\langle \Phi(\xi),\ \Phi(\eta)\rangle_{\c{N}}
=\langle \xi, \eta\rangle_{\c{N}}$ 
for any $a$ in $\c{M}$, 
for any $b$ in $\c{N}$ 
and for any $\xi, \eta$ in $\c{X}$.

\section{definition in the case of von Neumann algebras}
\begin{Def}\label{def:s.s}
Let ${\mathcal M}$ be von Neumann algebra.  
A \emph{self-similar triple} for $\c{M}$, abbreviated s.s.t., is a tuple $(k, p, \Phi)$, where $k$ is a natural number, where $p$ is a projection in $M_k(\c{M})$, and where $\Phi$ is an isomorphism from $\c{M}$ onto $pM_k(\c{M})p$.
\end{Def}

We define $T_{(k,p,\Phi)}:{\rm lin}_{\m{C}}T(\c{M})\rightarrow {\rm lin}_{\m{C}}T(\c{M})$ 
by $T_{(k,p,\Phi)}(\varphi)={\rm Tr}_{k}\otimes \varphi \circ \Phi$.  
We denote by $\mathcal{L}({\rm lin}_{\m{C}}T(\c{M}))$ the set of bounded linear maps from ${\rm lin}_{\m{C}}T(\c{M})$ into ${\rm lin}_{\m{C}}T(\c{M})$. 
\begin{Def}\label{def:ontracesp}
Let $\c{A}$ be a unital $C^{*}$-algebra.  
We define a subset $F^{tr}({\mathcal M})$ of $\c{L}({\rm lin}_{\m{C}}T(\c{M}))$ as follows;
\[ F^{tr}({\mathcal M}):=\{ T_{(k,p,\Phi)}\in \c{L}({\rm lin}_{\m{C}}T(\c{M})): (k,p,\Phi ): \r{ s.s.t}\}\]
\end{Def}

We suppose that $\c{M}$ is a finite von Neumann algebra with finite dimensional trace space.  
Then, by using $\c{M}$-$\c{M}$ equivalence bimodules, we will show $F^{tr}(\c{M})$ is a group.  

\begin{lem}\label{lem:full}
Let $\c{M}$ be a a finite von Neumann algebra with finite dimensional trace space 
and let $\c{X}$ be an $\c{M}$-$\c{M}$ equivalence bimodule as $W^{*}$-algebra.  
Then $\c{X}$ has a finite basis.  
\end{lem}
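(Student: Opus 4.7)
The plan is to reduce to the case of a single finite factor via the central decomposition of $\c{M}$, and then to invoke Murray--von Neumann dimension theory for modules over a finite factor. By Lemma~\ref{lem:finite}, $\c{M}=\bigoplus_{i=1}^{n}\c{M}_{i}$ is a finite direct sum of finite factors with minimal central projections $z_{1},\ldots,z_{n}$. Because each $z_{i}$ is central, $\c{X}$ splits as $\c{X}=\bigoplus_{i,j=1}^{n}z_{i}\c{X}z_{j}$, and each nonzero $\c{X}_{ij}:=z_{i}\c{X}z_{j}$ inherits the structure of a normal $\c{M}_{i}$-$\c{M}_{j}$ equivalence bimodule: weak density of $\{\braket{x,y}_{\c{M}}: x,y\in\c{X}\}$ in $\c{M}$ translates, upon multiplication by the central projections, into weak density of $\{\braket{x,y}_{\c{M}_{j}}: x,y\in\c{X}_{ij}\}$ in $\c{M}_{j}$, and similarly for the left inner products. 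Concatenating finite bases of the $\c{X}_{ij}$ yields a finite basis of $\c{X}$, so I may assume that the two acting algebras, call them $\c{M}$ and $\c{N}$, are single finite factors.

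In this reduced setting, any normal right Hilbert $\c{N}$-module is of the form $p\c{H}_{\c{N}}$ for some projection $p$ in $M_{\infty}(\c{N})$, where $\c{H}_{\c{N}}=L^{2}(\c{N},\tau_{\c{N}})\otimes \ell^{2}$ is the standard infinite Hilbert $\c{N}$-module. Under this identification $\c{L}_{\c{N}}(\c{X})\cong pM_{\infty}(\c{N})p$, and $\c{X}$ has a finite right basis if and only if $p$ is a finite projection in $M_{\infty}(\c{N})$, equivalently $({\rm Tr}\otimes\tau_{\c{N}})(p)<\infty$.

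The crux is therefore to show that $p$ is finite. The left action supplies a unital normal $*$-homomorphism $\lambda:\c{M}\to pM_{\infty}(\c{N})p$ with $\lambda(1_{\c{M}})=p$; since $\c{M}$ is a finite factor with faithful normal trace $\tau_{\c{M}}$, pulling back produces a faithful normal tracial state on $\lambda(\c{M})$. The inner product compatibility ${}_{\c{M}}\braket{x,y}z=x\braket{y,z}_{\c{N}}$ then lets one express $({\rm Tr}\otimes\tau_{\c{N}})(p)$ in terms of $\tau_{\c{M}}$ applied to a bounded quantity built from a weak approximation of $1_{\c{M}}$ by sums of left inner products, forcing $({\rm Tr}\otimes\tau_{\c{N}})(p)<\infty$. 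Hence $p$ is a finite projection and $\c{X}$ has a finite right basis; a symmetric argument (interchanging the roles of the two inner products and of $\c{M},\c{N}$) yields a finite left basis.

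The main obstacle will be the explicit trace computation in the reduced case, namely exploiting the inner product compatibility to deduce finiteness of $({\rm Tr}\otimes\tau_{\c{N}})(p)$ from finiteness of $\c{M}$; once that is established, extracting a finite basis from a finite projection in $M_{\infty}(\c{N})$ and reassembling finite bases across the finitely many blocks $\c{X}_{ij}$ are routine.
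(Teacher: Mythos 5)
Your route is genuinely different from the paper's, and it is worth seeing why the paper's is cheaper. The paper never touches the structure theory of W*-modules: it observes that the norm-closed linear span $I$ of $\set{{}_{\c{M}}\braket{x,y}:x,y\in \c{X}}$ is a norm-closed two-sided ideal of $\c{M}$, and since $\c{M}$ is a finite direct sum of finite factors (Lemma~\ref{lem:finite}) each summand is a simple $C^{*}$-algebra, so the only norm-closed ideals are sub-sums, all of which are weakly closed; weak fullness then forces $I=\c{M}$. Hence $1_{\c{M}}$ is a \emph{norm} limit of finite sums of left inner products, one such sum is invertible, and the perturbation $x_{i}\mapsto mx_{i}$ followed by the $n^{-\frac{1}{2}}(x_{i}+y_{i})$ trick yields an exact finite reproducing family. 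Your plan instead decomposes over the center (note that passing weak fullness to each block $z_{i}\c{X}z_{j}$ itself uses factoriality of the summands, since the cut-down weakly closed ideal of a factor is $0$ or everything), then realizes the right module as $p\c{H}_{\c{N}}$ and reduces to finiteness of $p$ in $M_{\infty}(\c{N})$. What this buys is a more structural conclusion (finiteness of the Murray--von Neumann dimension of $\c{X}$), at the cost of the Paschke--Rieffel classification, which requires a self-duality or countable-generation hypothesis you would have to address before writing $\c{X}\cong p\c{H}_{\c{N}}$.

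The genuine gap is the step you yourself flag as the crux. A weak approximation of $1_{\c{M}}$ by finite sums $\sum{}_{\c{M}}\braket{x_{i},y_{i}}$ with arbitrary $x_{i},y_{i}$ gives no control on $({\rm Tr}\otimes\tau_{\c{N}})(p)$: those vectors bear no relation to $p$, and "a bounded quantity built from a weak approximation" is not yet an argument. Two ways to close it: (a) choose an orthogonal generating family $\set{e_{\alpha}}$ realizing $p$, so that $\sum_{\alpha}{}_{\c{M}}\braket{e_{\alpha},e_{\alpha}}=1_{\c{M}}$ weakly and $({\rm Tr}\otimes\tau_{\c{N}})(p)=\sum_{\alpha}\tau_{\c{N}}(\braket{e_{\alpha},e_{\alpha}}_{\c{N}})$, prove the proportionality $\tau_{\c{M}}({}_{\c{M}}\braket{x,x})=\lambda\,\tau_{\c{N}}(\braket{x,x}_{\c{N}})$ with $\lambda>0$ (uniqueness of the trace on the factor $\c{M}$ acting as the imprimitivity algebra), and conclude $({\rm Tr}\otimes\tau_{\c{N}})(p)=\lambda^{-1}<\infty$; or (b) bypass the computation entirely by noting that the left action identifies the finite factor $\c{M}$ with $pM_{\infty}(\c{N})p$, and a projection in the semifinite factor $M_{\infty}(\c{N})$ whose corner is finite is a finite projection. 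Without one of these, finiteness of $p$ is asserted rather than proved.
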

\begin{proof}
Let $I$ be a norm-closed linear span of the set $\set{{}_{\c{M}}\braket{x,y}:x,y\in \c{X}}$.  
Then $I$ is an norm-closed ideal of $\c{M}$.   
By \ref{lem:finite}, $\c{M}$ is a direct sum of finite factors.  
Since the weak${}^{*}$-closed linear span of the set $\set{{}_{\c{M}}\braket{x,y}:x,y\in \c{X}}$ is $\c{M}$, 
$I=\c{M}$.  
Then there exist some elements $x_{i},y_{i}$ in $\c{X}$ such that $\parallel \sum^{n}_{i=1}{}_{\c{M}}\braket{x_{i},y_{i}}-1_{\c{M}}\parallel <1$.  
Therefore there exists an element $m$ in $\c{M}$ such that $m(\sum^{n}_{i=1}{}_{\c{M}}\braket{x_{i},y_{i}})=\sum^{n}_{i=1}{}_{\c{M}}\braket{mx_{i},y_{i}}=1_{\c{M}}$.
For simplicity of notation, we write $x_{i}$ instead of $mx_{i}$.  
Then we can assume that $\sum^{n}_{i=1}{}_{\c{M}}\braket{x_{i},y_{i}}=1_{\c{M}}$ for some $x_{i}$, $y_{i}$.  \\
Put $n=\sum^{n}_{i=1}{}_{\c{M}}\braket{x_{i}+y_{i},x_{i}+y_{i}}$, then $n\geq 1_{\c{M}}$.  \\
Therefore $\sum^{n}_{i=1}{}_{\c{M}}\braket{n^{-\frac{1}{2}}(x_{i}+y_{i}),n^{-\frac{1}{2}}(x_{i}+y_{i})}=1_{\c{M}}$.  
Put $z_{i}=n^{-\frac{1}{2}}(x_{i}+y_{i})$.  
Then $\sum^{n}_{i=1}{}_{\c{M}}\braket{z_{i},z_{i}}=1_{\c{M}}$ and $x=\sum^{n}_{i=1}z_{i}\braket{z_{i},x}_{\c{M}}$ for any $x$ in $\c{X}$.  
Therefore $\set{z_{i}}^{n}_{i=1}$ is a finite basis of $\c{X}$.  
\end{proof}

\begin{pro}\label{pro:respond}
Let $\c{M}$ be a von Neumann algebra
and let $\c{X}$ be an $\c{M}$-$\c{M}$ equivalence bimodule.   
Then there exists a self-similar full projection of $M_{n}(\c{M})$ such that $\c{X}$ is isomorphic to $p\c{M}^{n}$ as an $\c{M}$-$\c{M}$ equivalence bimodule.  
Conversely, $p\c{M}^{n}$ is an $\c{M}$-$\c{M}$ equivalence bimodule if $p$ is a self-similar full projection in $M_{n}(\c{M})$.  
\end{pro}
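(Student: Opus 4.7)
The plan is to adapt the classical Rieffel reconstruction of an equivalence bimodule as a corner of a matrix algebra to the $W^{*}$-setting, with Lemma \ref{lem:full} supplying the finite basis that makes the corner of finite matrix size.

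For the forward direction, I would first apply Lemma \ref{lem:full} to fix a finite right basis $\{z_{i}\}_{i=1}^{n}\subset \c{X}$ with $\sum_{i}{}_{\c{M}}\braket{z_{i},z_{i}}=1_{\c{M}}$ and reconstruction identity $x=\sum_{i}z_{i}\braket{z_{i},x}_{\c{M}}$. Set $p=(\braket{z_{i},z_{j}}_{\c{M}})_{ij}\in M_{n}(\c{M})$; the basis relations give $p=p^{*}=p^{2}$ directly. Define the candidate bimodule isomorphism $\Psi_{0}:\c{X}\to p\c{M}^{n}$ by $\Psi_{0}(x)=(\braket{z_{i},x}_{\c{M}})_{i}$, and the candidate self-similarity map $\Phi:\c{M}\to pM_{n}(\c{M})p$ by $\Phi(m)_{ij}=\braket{z_{i},mz_{j}}_{\c{M}}$. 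Right $\c{M}$-linearity, bijectivity, preservation of both inner products, and the intertwining identity $\Psi_{0}(mx)=\Phi(m)\Psi_{0}(x)$ all reduce to the reconstruction identity and the basis relations. That $\Phi$ is a $*$-isomorphism onto $pM_{n}(\c{M})p$ follows from faithfulness of the left action (built into the notion of equivalence bimodule) together with the weak density of $\{\braket{x,y}_{\c{M}}\}$ in $\c{M}$; the same density feeds into fullness of $p$ in $M_{n}(\c{M})$.

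For the converse, given a self-similar full triple $(n,p,\Phi)$, put $\c{X}:=p\c{M}^{n}$ with its natural right $\c{M}$-module structure and right inner product $\braket{\xi,\eta}_{\c{M}}=\sum_{i}\xi_{i}^{*}\eta_{i}$. Equip it with the left $\c{M}$-action $m\cdot\xi:=\Phi(m)\xi$ and the left inner product ${}_{\c{M}}\braket{\xi,\eta}:=\Phi^{-1}(\xi\eta^{*})$, which is well defined since $\xi\eta^{*}\in pM_{n}(\c{M})p$ whenever $\xi,\eta\in p\c{M}^{n}$. The bimodule axioms and the associativity ${}_{\c{M}}\braket{\xi,\eta}\zeta=\xi\braket{\eta,\zeta}_{\c{M}}$ are then direct matrix computations, and normality of both inner products is automatic from the weak continuity of matrix operations on $M_{n}(\c{M})$.

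The main obstacle I expect is the weak-density step in both directions: one must match the weak density of the $\c{M}$-valued inner products in $\c{M}$ with the fullness and self-similarity of $p$. Concretely this means verifying that $\{\xi\eta^{*}:\xi,\eta\in p\c{M}^{n}\}$ is weakly dense in $pM_{n}(\c{M})p$ and dually for the $\c{M}^n p$-side. This amounts to a careful analysis of the weak$^{*}$-closed two-sided ideal of $M_{n}(\c{M})$ generated by $p$, and is where the argument genuinely departs from the norm-closed $C^{*}$-Morita setting; the first-paragraph argument of Lemma \ref{lem:full} (where norm-closed ideals in a finite direct sum of factors coincide with weak$^{*}$-closed ideals) is the template I would follow.
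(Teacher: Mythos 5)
Your proposal is correct and follows essentially the same route as the paper: invoke Lemma \ref{lem:full} to get a finite basis $\{z_{i}\}_{i=1}^{n}$, set $p=(\braket{z_{i},z_{j}}_{\c{M}})_{ij}$, and check that $x\mapsto(\braket{z_{i},x}_{\c{M}})_{i}$ and $m\mapsto(\braket{z_{i},mz_{j}}_{\c{M}})_{ij}$ give the bimodule isomorphism and the self-similarity, with the converse obtained by transporting the left action through $\Phi$. The paper states all of this in three lines without verification, so your filled-in details (including the weak-density/fullness check it omits) are a strict refinement of the same argument rather than a different one.
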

\begin{proof}
Let $\c{X}$ be an $\c{M}$-$\c{M}$ equivalence bimodule.  
By \ref{lem:full}, there exists a finite basis $\set{z_{i}}^{n}_{i=1}$ of $\c{X}$.  
Put $p=(\braket{z_{i},z_{j}}_\c{M})_{ij} \in M_n(\c{M})$. 
Then $p$ is a full projection and $\mathcal{X}$ is isomorphic to 
$p\c{M}^n$ as $\c{M}$-$\c{M}$ equivalence bimodule 
with an isomorphism of $\c{M}$ to $pM_n(\c{M})p$.  
Conversely, we suppose that $p$ is a full projection in $M_{k}(\c{M})$ with an isomorphism $\alpha:\c{M}\rightarrow pM_{n}(\c{M})p$ .  
Then $p\c{M}^{n}$ is an $\c{M}$-$\c{M}$ equivalence bimodule 
with the operations $m\cdot x=\alpha(m)x$, $x\cdot m=xm$.  
\end{proof}

We denote by $[\c{X}]$ the set of isomorphic classes of $\c{M}$-$\c{M}$ equivalence bimodule $\c{X}$.  
\begin{pro}
The set $\set{[\c{X}] \mid \c{X}:\c{M}\rm{-}\c{M}\ \rm{equivalence\ bimodule}}$ forms a group by internal tensor product.  Identity is $[\c{M}]$.  
\end{pro}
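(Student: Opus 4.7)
The plan is to verify each group axiom in turn, relying on Proposition \ref{pro:respond} and the finite basis from Lemma \ref{lem:full} for concreteness whenever needed. For well-definedness and closure, I would check that the internal tensor product of two normal $\c{M}$-$\c{M}$ equivalence bimodules $\c{X}, \c{Y}$ is again a normal $\c{M}$-$\c{M}$ equivalence bimodule, using the standard formula $\braket{x_1 \otimes y_1, x_2 \otimes y_2}_{\c{M}} := \braket{y_1, \braket{x_1, x_2}_{\c{M}} y_2}_{\c{M}}$ and the symmetric expression on the left; fullness of $\c{X}$ and $\c{Y}$ gives fullness of the tensor product. The construction is functorial in each slot, hence descends to isomorphism classes, and associativity is the usual associativity of the internal tensor product of Hilbert $C^*$-bimodules. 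For the identity, the canonical map $\c{M} \otimes_{\c{M}} \c{X} \to \c{X}$, $m \otimes x \mapsto m x$, is an equivalence bimodule isomorphism: surjectivity is the density of $\c{MX}$ in $\c{X}$, while preservation of both inner products and $\c{M}$-linearity follow by direct calculation using the $\c{M}$-balancing relation.

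The main obstacle is the existence of inverses. I would introduce the \emph{conjugate bimodule} $\tilde{\c{X}}$ whose underlying additive group is the complex conjugate of $\c{X}$, with elements written $\bar{x}$, bimodule actions defined by $m \cdot \bar{x} \cdot n := \overline{n^{*} x m^{*}}$, and inner products swapped by ${}_{\c{M}}\braket{\bar{x}, \bar{y}} := \braket{x, y}_{\c{M}}$ and $\braket{\bar{x}, \bar{y}}_{\c{M}} := {}_{\c{M}}\braket{x, y}$. The verification that $\tilde{\c{X}}$ is again a normal $\c{M}$-$\c{M}$ equivalence bimodule is a careful but mechanical check; the compatibility identity ${}_{\c{M}}\braket{\bar{x}, \bar{y}} \bar{z} = \bar{x} \braket{\bar{y}, \bar{z}}_{\c{M}}$ reduces, after unraveling the conjugation and the action formulas, to the original compatibility identity in $\c{X}$.

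I would then define the candidate pairing $\Psi : \c{X} \otimes_{\c{M}} \tilde{\c{X}} \to \c{M}$ by $x \otimes \bar{y} \mapsto {}_{\c{M}}\braket{x, y}$, check that it is $\c{M}$-$\c{M}$ bimodule linear and preserves both inner products, and deduce that it is an isomorphism of equivalence bimodules. Surjectivity is immediate because, by Lemma \ref{lem:full}, the span of $\{{}_{\c{M}}\braket{x, y} : x, y \in \c{X}\}$ is all of $\c{M}$. For the two-sided inverse, I would use the finite basis $\{z_i\}$ of $\c{X}$ from Lemma \ref{lem:full} with $\sum_i {}_{\c{M}}\braket{z_i, z_i} = 1_{\c{M}}$ and reconstruction formula $x = \sum_i z_i \braket{z_i, x}_{\c{M}}$; this produces the explicit candidate inverse $\Phi : m \mapsto \sum_i m z_i \otimes \bar{z_i}$. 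The computation $\Psi \circ \Phi(m) = \sum_i {}_{\c{M}}\braket{m z_i, z_i} = m$ is direct, and $\Phi \circ \Psi(x \otimes \bar{y}) = x \otimes \bar{y}$ follows by moving $\braket{y, z_i}_{\c{M}}$ across the balanced tensor product and then applying the reconstruction formula. An entirely analogous construction shows $\tilde{\c{X}} \otimes_{\c{M}} \c{X} \cong \c{M}$, so $[\tilde{\c{X}}] = [\c{X}]^{-1}$ and the set of isomorphism classes is a group with identity $[\c{M}]$.
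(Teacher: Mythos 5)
The paper states this proposition without any proof at all, so there is nothing to compare against; your argument supplies the missing details and is correct. It is the standard construction of the Picard group of a $C^{*}$-/$W^{*}$-algebra (closure and associativity of the internal tensor product, $\c{M}$ itself as the unit via $m\otimes x\mapsto mx$, and the conjugate bimodule $\tilde{\c{X}}$ with the pairing $x\otimes\bar{y}\mapsto{}_{\c{M}}\braket{x,y}$ giving inverses), and your verification of the compatibility identity for $\tilde{\c{X}}$ and of the two-sided inverse $\Phi:m\mapsto\sum_{i}mz_{i}\otimes\bar{z_{i}}$ using the reconstruction formula checks out. Two small remarks: your explicit inverse for $\Psi$ leans on the finite basis from Lemma \ref{lem:full}, which is available only under the standing hypothesis (in force in this section) that $\c{M}$ has finite dimensional trace space --- for general $\c{M}$ one would instead argue that $\Psi$ is inner-product preserving with dense range and hence extends to an isomorphism; and for the identity you should also record the symmetric isomorphism $\c{X}\otimes_{\c{M}}\c{M}\cong\c{X}$, though it is immediate. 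Neither point is a gap in the present context.
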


We denote by ${\rm Pic}(\c{M})$ the group.  
We define a map $R_{\c{M}}:{\rm Pic}(\c{M}) \rightarrow \c{L}({\rm lin}_{\m{C}}T(\c{M}))$ by $R_{\c{M}} ([\c{X}])(\varphi)(a)=\sum^{n}_{i=1}\varphi(\braket{\xi_{i},a\xi_{i}}_{\c{M}})$ where $\set{\xi_{i}}^{n}_{i=1}$ is a right basis of $\c{X}$.  

\begin{pro}\label{pro:mul}
The map $R_{\c{M}}$ is well-defined, multiplicative, and $R_{\c{M}}([\c{M}])={\rm Id}_{{\rm lin}_{\m{C}}T(\c{M})}$ where ${\rm Id}_{{\rm lin}_{\m{C}}T(\c{M})}$ is an identity map from ${\rm lin}_{\m{C}}T(\c{M})$ onto ${\rm lin}_{\m{C}}T(\c{M})$.   
\end{pro}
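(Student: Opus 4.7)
The plan is to check, in order, that $R_{\c{M}}([\c{X}])(\varphi)$ lies in ${\rm lin}_{\m{C}}T(\c{M})$, that it is independent of the chosen right basis, that it is independent of the isomorphism class of $\c{X}$, and finally that $R_{\c{M}}$ is multiplicative with $R_{\c{M}}([\c{M}])$ equal to the identity. The two key inputs are Lemma \ref{lem:full} (finite right bases exist) and Proposition \ref{pro:respond} (every equivalence bimodule is of the form $p\c{M}^{n}$).

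For the first point, finiteness of the sum is immediate from Lemma \ref{lem:full}. Identifying $\c{X}$ with $p\c{M}^{n}$ for a self-similar full projection $p$ and an isomorphism $\alpha:\c{M}\to pM_{n}(\c{M})p$ via Proposition \ref{pro:respond}, and using the column basis $\xi_{i}=pe_{i}$, a short computation yields $R_{\c{M}}([\c{X}])(\varphi)={\rm Tr}_{n}\otimes\varphi\circ\alpha=T_{(n,p,\alpha)}(\varphi)$. Since ${\rm Tr}_{n}\otimes\varphi$ is tracial on $M_{n}(\c{M})$, its restriction to the corner $pM_{n}(\c{M})p$ is a positive tracial functional, and pulling back along the $*$-isomorphism $\alpha$ gives a tracial functional on $\c{M}$; extending $\m{C}$-linearly in $\varphi$ shows that $R_{\c{M}}([\c{X}])$ sends ${\rm lin}_{\m{C}}T(\c{M})$ into itself, and in fact realizes it as an element of $F^{tr}(\c{M})$.

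Next I would prove basis-independence directly from the definitions. Given another right basis $\{\eta_{j}\}$, expand $\xi_{i}=\sum_{j}\eta_{j}\braket{\eta_{j},\xi_{i}}_{\c{M}}$ and substitute into $\braket{\xi_{i},a\xi_{i}}_{\c{M}}$; after applying $\varphi$, cycling the factor $\braket{\eta_{j},\xi_{i}}_{\c{M}}^{*}$ to the right by the trace property, summing over $i$ and collapsing $\sum_{i}\braket{\eta_{k},\xi_{i}}_{\c{M}}\braket{\xi_{i},\eta_{j}}_{\c{M}}=\braket{\eta_{k},\eta_{j}}_{\c{M}}$ together with $\sum_{k}a\eta_{k}\braket{\eta_{k},\eta_{j}}_{\c{M}}=a\eta_{j}$ via the reconstruction formula, the expression reduces to $\sum_{j}\varphi(\braket{\eta_{j},a\eta_{j}}_{\c{M}})$. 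Isomorphism-class invariance is then formal: an equivalence-bimodule isomorphism $\Phi:\c{X}\to\c{Y}$ sends a right basis to a right basis while preserving both the inner product and the left action, so the two sums coincide.

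For multiplicativity, I would take right bases $\{\xi_{i}\}$ of $\c{X}$ and $\{\eta_{j}\}$ of $\c{Y}$; then $\{\xi_{i}\otimes\eta_{j}\}$ is a right basis of the interior tensor product $\c{X}\otimes_{\c{M}}\c{Y}$, and the defining identity $\braket{\xi_{i}\otimes\eta_{j},a(\xi_{i}\otimes\eta_{j})}_{\c{M}}=\braket{\eta_{j},\braket{\xi_{i},a\xi_{i}}_{\c{M}}\eta_{j}}_{\c{M}}$ lets one read the double sum as $R_{\c{M}}([\c{X}])$ applied to $R_{\c{M}}([\c{Y}])(\varphi)$, giving $R_{\c{M}}([\c{X}\otimes_{\c{M}}\c{Y}])=R_{\c{M}}([\c{X}])\circ R_{\c{M}}([\c{Y}])$. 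For the identity claim, taking the one-element basis $\{1_{\c{M}}\}$ of $\c{M}$ yields $R_{\c{M}}([\c{M}])(\varphi)(a)=\varphi(\braket{1_{\c{M}},a\cdot 1_{\c{M}}}_{\c{M}})=\varphi(a)$ directly. The main obstacle is the basis-change identity, which is the only step that uses the trace property of $\varphi$ in an essential way; once it is in hand, the remaining verifications are essentially formal manipulations with bases and the interior tensor product.
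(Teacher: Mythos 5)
Your proposal is correct and follows essentially the same route as the paper: basis-independence via the reconstruction formula and the trace property of $\varphi$, isomorphism-class invariance by transporting a right basis along the bimodule isomorphism, and multiplicativity via the tensor-product basis $\set{\xi_{i}\otimes\eta_{j}}$. The only additions are your explicit check that $R_{\c{M}}([\c{X}])(\varphi)$ again lies in ${\rm lin}_{\m{C}}T(\c{M})$ (which the paper in effect defers to Proposition \ref{pro:extfundamental group}) and the one-line verification of $R_{\c{M}}([\c{M}])={\rm Id}$ using the basis $\set{1_{\c{M}}}$, which the paper states but does not write out.
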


\begin{proof}
Let $\varphi$ be a bounded trace on ${\mathcal M}$, 
$a$ be an element of ${\mathcal M}$, 
${\mathcal E}$ be a finitely generated ${\mathcal M}-{\mathcal M}$ equivalence bimodule and let $\{\xi_{i}\}_{i=1}^{k}$ 
and $\{\eta_{j}\}_{j=1}^{l}$ be finite right bases of ${\mathcal E}$.  
Then 
\begin{eqnarray}
\sum_{i=1}^{k}\varphi(\langle\xi_{i}, a\xi_{i} \rangle_{{\mathcal M}})&=&\sum_{i=1}^{k}\varphi(\langle\xi_{i}, \sum_{j=1}^{l}\eta_{j}\langle\eta_{j}, a\xi_{i} \rangle_{{\mathcal M}} \rangle_{{\mathcal M}}) 
= \sum_{i, j=1}^{k, l} \varphi(\langle \xi_{i}, \eta_{j} \rangle_{{\mathcal M}}\langle \eta_{j}, a\xi_{i} \rangle_{{\mathcal M}}) \nonumber \\
&=& \sum_{i, j=1}^{k, l} \varphi(\langle \eta_{j}, a\xi_{i} \rangle_{M}\langle \xi_{i}, \eta_{j} \rangle_{{\mathcal M}}) 
= \sum_{j=1}^{l}\varphi(\langle\eta_{j}, a\eta_{j} \rangle_{{\mathcal M}})\nonumber 
\end{eqnarray}
Therefore $R_{\mathcal M}([{\mathcal E}])$ is independent on the choice of basis.\\  
\ \ Let ${\mathcal E}_{1}$ and ${\mathcal E}_{2}$ be ${\mathcal M}$-${\mathcal M}$ imprimitivity bimodules 
with bases $\{\xi_{i}\}_{i=1}^{k}$ and $\{\zeta_{j}\}_{j=1}^{l}$.  
We suppose that there exists an isomorphism $\Phi$ of ${\mathcal E}_{1}$ onto ${\mathcal E}_{2}$.  
Then $\{\Phi(\xi_{i})\}_{i=1}^{k}$ is also a basis of ${\mathcal E}_{2}$.  
Then 
\begin{eqnarray}
\sum_{i=1}^{k}\varphi(\langle\xi_{i}, a\xi_{i} \rangle_{{\mathcal M}})=\sum_{i=1}^{k}\varphi(\langle\Phi(\xi_{i}), a\Phi(\xi_{i}) \rangle_{{\mathcal M}})
= \sum_{j=1}^{l}\varphi(\langle\zeta_{i}, a\zeta_{i} \rangle_{{\mathcal M}}) \nonumber 
\end{eqnarray}
Therefore $R_{\mathcal M}$ is well-defined.  \\
\ \ We shall show that $R_{\mathcal M}$ is multiplicative.   
Let ${\mathcal E}_{1}$ and ${\mathcal E}_{2}$ be finitely generated ${\mathcal M}-{\mathcal M}$ equivalence bimodules with bases $\{\xi_{i}\}_{i=1}^{k}$ and $\{\eta_{j}\}_{j=1}^{l}$.  
Then $\{\xi_{i}\otimes\eta_{j}\}_{i, j=1}^{k, l}$ is a basis of ${\mathcal E}_{1}\otimes{\mathcal E}_{2}$ and
\begin{eqnarray}
\left( R_{\mathcal M}([{\mathcal E}_{1}\otimes{\mathcal E}_{2}])\left(\varphi \right) \right) (a)=\sum_{i, j=1}^{k, l}\varphi(\langle \xi_{i}\otimes\eta_{j}, a\xi_{i}\otimes\eta_{j} \rangle)
=\sum_{i, j=1}^{k, l}\varphi(\langle \eta_{j}, \langle\xi_{i}, a\xi_{i} \rangle_{\mathcal M} \eta_{j} \rangle_{{\mathcal M}}) \nonumber
\end{eqnarray}
On the other hand
\begin{eqnarray}
\left(R_{\mathcal M}([{\mathcal E}_{1}])R_{\mathcal M}([{\mathcal E}_{2}])(\varphi)\right)(a) &=& \sum_{i=1}^{k}\left(R_{\mathcal M}([{\mathcal E}_{2}])\left(\varphi \right)\right)(\langle \xi_{i}, a\xi_{i} \rangle_{{\mathcal M}}) \nonumber \\
&=&\sum_{i, j=1}^{k, l}\varphi(\langle \eta_{j}, \langle\xi_{i}, a\xi_{i} \rangle_{{\mathcal M}} \eta_{j} \rangle_{{\mathcal M}}) \nonumber
\end{eqnarray}
Therefore $R_{\mathcal M}$ is multiplicative. 
\end{proof}
\begin{pro}\label{pro:extfundamental group}
Let ${\mathcal M}$ be a von Neumann algebra.  
Then $F^{tr}({\mathcal M})=R_{\mathcal  M}({\rm Pic}({\mathcal M}))$.  
\end{pro}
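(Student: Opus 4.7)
The plan is to use Proposition \ref{pro:respond} as a dictionary between isomorphism classes of $\c{M}$-$\c{M}$ equivalence bimodules and self-similar triples, and then to verify by a direct matrix computation that the two constructions $R_{\c{M}}$ and $T_{(\cdot,\cdot,\cdot)}$ attach the same operator on $\mathrm{lin}_{\m{C}}T(\c{M})$ to corresponding data.

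For the inclusion $F^{tr}(\c{M}) \subseteq R_{\c{M}}(\mathrm{Pic}(\c{M}))$, I would start with a self-similar triple $(k, p, \Phi)$ and invoke the second half of Proposition \ref{pro:respond} to form the $\c{M}$-$\c{M}$ equivalence bimodule $\c{X} := p\c{M}^k$ with left action $m \cdot x = \Phi(m)x$ and right action $x \cdot m = xm$. I would then take as a right basis the columns $\xi_i := p e_i$ for $i = 1, \dots, k$, noting that $\langle \xi_i, y\rangle_{\c{M}} = e_i^* y$ so $\sum_i \xi_i \langle \xi_i, y\rangle_{\c{M}} = p y = y$ for every $y \in p\c{M}^k$. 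Since $\Phi(a) = p \Phi(a) p$, the key entrywise identity is
\[ \langle \xi_i, \Phi(a)\xi_i\rangle_{\c{M}} = e_i^{*} p\,\Phi(a)\,p\, e_i = \Phi(a)_{ii}, \]
which immediately gives
\[ R_{\c{M}}([\c{X}])(\varphi)(a) = \sum_{i=1}^{k} \varphi(\Phi(a)_{ii}) = (\mathrm{Tr}_k\otimes\varphi)(\Phi(a)) = T_{(k,p,\Phi)}(\varphi)(a). \]

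For the reverse inclusion, given an arbitrary $\c{M}$-$\c{M}$ equivalence bimodule $\c{X}$, Lemma \ref{lem:full} supplies a finite right basis, and the forward direction of Proposition \ref{pro:respond} realizes $\c{X}$ as $p\c{M}^k$ for a self-similar full projection $p \in M_k(\c{M})$ together with an induced isomorphism $\Phi : \c{M} \to pM_k(\c{M})p$, that is, an s.s.t.\ $(k, p, \Phi)$. Because Proposition \ref{pro:mul} already ensures that $R_{\c{M}}([\c{X}])$ is independent of the basis, applying the same matrix-entry computation to the standard basis $\{pe_i\}$ yields $R_{\c{M}}([\c{X}]) = T_{(k,p,\Phi)} \in F^{tr}(\c{M})$.

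The principal obstacle is really bookkeeping inside Proposition \ref{pro:respond}: one must check that the isomorphism $\Phi$ recovered from a basis of $\c{X} \cong p\c{M}^k$ matches the left action used to build the bimodule, so that the s.s.t.\ extracted from a bimodule and the bimodule built from an s.s.t.\ are genuinely inverse constructions up to isomorphism. Once this identification is in place, the entire proof reduces to the single matrix-entry calculation above, and both inclusions follow at once.
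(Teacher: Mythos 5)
Your proposal is correct and follows essentially the same route as the paper: both directions pass through Proposition \ref{pro:respond} to identify a bimodule with $p\c{M}^{k}$ (with basis $\{pe_{i}\}$) and reduce everything to the computation $\sum_{i}\varphi(\langle pe_{i},\alpha(a)pe_{i}\rangle_{\c{M}})=\sum_{i}\varphi(e_{i}^{*}\alpha(a)e_{i})=({\rm Tr}_{k}\otimes\varphi)(\alpha(a))$. The only cosmetic difference is that you treat the two inclusions in the opposite order and make explicit the bookkeeping (that the isomorphism $\Phi$ and the left action match), which the paper leaves implicit.
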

\begin{proof}
Let ${\mathcal E}$ be a finitely generated ${\mathcal M}$-${\mathcal M}$ equivalence bimodule 
and let $\{\xi_{i}\}^{k}_{i=1}$ be a basis of ${\mathcal E}$.  
Put $p=(\langle \xi_{i},\xi_{j}\rangle)_{ij}$.  
Then ${\mathcal E}$ is isomorphic to $p{\mathcal M}^{k}$ as an ${\mathcal M}$-${\mathcal M}$ equivalence bimodule and there exists an ${}^{*}$-isomorphism $\alpha:{\mathcal M}\rightarrow pM_{k}({\mathcal M})p$.  
Then
\begin{eqnarray}
(R_{{\mathcal M}}([{\mathcal E}])(\varphi ))(a)&=&\sum^{k}_{i=1}\varphi (\langle \xi_{i},a\xi_{i} \rangle )
= \sum^{k}_{i=1}\varphi (\langle pe_{i}, \alpha (a)pe_{i} \rangle )\nonumber\\
&=& \sum^{k}_{i=1}\varphi ( e^{*}_{i}\alpha (a)e_{i})
= ({\rm Tr}_{k}\otimes \varphi)\circ (\alpha)(a)\nonumber
\end{eqnarray}
Therefore $F^{tr}({\mathcal M})\supset R_{{\mathcal M}}({\rm Pic}({\mathcal M}))$.  
Conversely, we suppose that $p$ be a projection with a ${}^{*}$-isomorphism $\alpha:{\mathcal M}\rightarrow pM_{k}({\mathcal M})p$ and that the linear span of $\{a^{*}pb \mid a,b\in {\mathcal M}^{k}\}$ is dense in ${\mathcal M}$.    
Then, $p{\mathcal M}^{k}$ is an ${\mathcal M}$-${\mathcal M}$ equivalence bimodule 
with a basis $\{pe_{i}\}^{k}_{i=1}$.  
Then 
\begin{eqnarray}
({\rm Tr}_{k}\otimes \varphi)\circ (\alpha)(a)&=& \sum^{k}_{i=1}\varphi (\langle e^{*}_{i}, \alpha (a)e_{i} \rangle )
= \sum^{k}_{i=1}\varphi (\langle pe_{i}, \alpha (a)pe_{i} \rangle )\nonumber\\
&=&(R_{{\mathcal M}}([p{\mathcal M}^{k}])(\varphi ))(a)\nonumber
\end{eqnarray}
Therefore $F^{tr}({\mathcal M})\subset$ $R_{{\mathcal M}}({\rm Pic}({\mathcal M}))$.  
Hence $F^{tr}({\mathcal M})$ $=$ $R_{{\mathcal M}}({\rm Pic}({\mathcal M}))$.  
\end{proof}

\begin{pro}
Let $\c{M}$ be a von Neumann algebra.  
Then $F^{tr}(\c{M})$ is a subgroup of $\c{GL}({\rm lin}_{\m{C}}T(\c{M}))$.  
\end{pro}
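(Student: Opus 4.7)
The plan is to bootstrap the previous two propositions: combine the fact that ${\rm Pic}(\c{M})$ is a group (under internal tensor product) with Propositions \ref{pro:mul} and \ref{pro:extfundamental group}, which together say that $R_{\c{M}}$ is a multiplicative map sending $[\c{M}]$ to the identity, and whose image is exactly $F^{tr}(\c{M})$. From these facts the group structure of $F^{tr}(\c{M})$ drops out essentially for free.

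More concretely, I would proceed in three short steps. First, invoke Proposition \ref{pro:extfundamental group} to rewrite $F^{tr}(\c{M}) = R_{\c{M}}({\rm Pic}(\c{M}))$, so that it suffices to check that this image is a subgroup of $\c{GL}({\rm lin}_{\m{C}}T(\c{M}))$. Second, show that every element of the image is invertible: given $[\c{X}] \in {\rm Pic}(\c{M})$ with inverse $[\c{X}]^{-1}$, the multiplicativity established in Proposition \ref{pro:mul} gives
\begin{equation*}
R_{\c{M}}([\c{X}]) \circ R_{\c{M}}([\c{X}]^{-1}) = R_{\c{M}}([\c{X}] \otimes [\c{X}]^{-1}) = R_{\c{M}}([\c{M}]) = {\rm Id}_{{\rm lin}_{\m{C}}T(\c{M})},
\end{equation*}
and symmetrically on the other side, so $R_{\c{M}}([\c{X}])$ lies in $\c{GL}({\rm lin}_{\m{C}}T(\c{M}))$ with inverse $R_{\c{M}}([\c{X}]^{-1})$. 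Third, closure under composition and inverses in $\c{GL}({\rm lin}_{\m{C}}T(\c{M}))$ is immediate from multiplicativity: a product of elements of the image corresponds via $R_{\c{M}}$ to the tensor product of the corresponding bimodule classes, which again lies in ${\rm Pic}(\c{M})$, and the inverse corresponds to the inverse class.

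There is really no main obstacle here; all the content has already been packed into the preceding propositions (the hard parts being the existence of a finite basis from Lemma \ref{lem:full} and the basis-independence/multiplicativity of $R_{\c{M}}$ from Proposition \ref{pro:mul}). The only thing to watch is that the assertion quietly requires each $R_{\c{M}}([\c{X}])$ to be a bounded operator on ${\rm lin}_{\m{C}}T(\c{M})$, which is built into the definition of the codomain $\c{L}({\rm lin}_{\m{C}}T(\c{M}))$ used when $R_{\c{M}}$ was introduced. Thus the proof reduces to a one-line remark: $F^{tr}(\c{M})$ is the image of the group ${\rm Pic}(\c{M})$ under a unital multiplicative map, and any such image inside $\c{L}({\rm lin}_{\m{C}}T(\c{M}))$ is automatically contained in, and a subgroup of, $\c{GL}({\rm lin}_{\m{C}}T(\c{M}))$.
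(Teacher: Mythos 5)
Your proposal is correct and follows exactly the paper's own argument: the paper likewise combines Proposition \ref{pro:extfundamental group} (identifying $F^{tr}(\c{M})$ with $R_{\c{M}}({\rm Pic}(\c{M}))$) with the multiplicativity and unitality of $R_{\c{M}}$ from Proposition \ref{pro:mul} to conclude that this image is a subgroup of $\c{GL}({\rm lin}_{\m{C}}T(\c{M}))$. You have merely spelled out the invertibility step that the paper leaves implicit.
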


\begin{proof}
By \ref{pro:mul}, $R_{\c{M}}({\rm Pic}(\c{M}))$ is a subgroup of $\c{GL}({\rm lin}_{\m{C}}T(\c{M}))$.  
Then, by \ref{pro:extfundamental group}, $F^{tr}(\c{M})$ is also.  
\end{proof}

We shall consider the case that $\c{M}$ is a finite von Neumann algebra with finite dimensional normal trace space.  
By \ref{lem:finite}, we can write $\c{M}=\oplus^{n}_{i=1}\c{M}_{i}$ for some finite factor $\c{M}_{i}$.  
Let $\tau_{i}$ be a normalized unique trace of $\c{M}_{i}$.  
We define a normalized trace $\varphi_{i}$ of $\c{M}$ by $\varphi_{i}((x_{k})_{k})=\tau_{i}(x_{i})$.  
Then $\set{\varphi_{i}}^{n}_{i=1}$ is a set of extremal points of $T(\c{M})$ and is a basis of ${\rm lin}_{\m{C}}T(\c{M})$,  
When we use $\set{\varphi_{i}}^{n}_{i=1}$ to denote the basis of $T(\c{M})$, consider that $\varphi_{i}$ corresponds to $M_{i}$.  

\begin{Def}
Let $\c{M}$ be a finite von Neumann algebra with finite dimensional normal trace space.  
Put $\c{M}=\oplus^{n}_{i=1}\c{M}_{i}$, where $\c{M}_{i}$ is a finite factor.   
We define the fundamental group $F(\c{M})$ by the matrix representation of $F^{tr}(\c{M})$ with the basis $\set{\varphi_{i}}^{n}_{i=1}$.  
In other words; We define the representation $S_{{\mathcal M}}:{\mathcal L} ({\rm lin}_{\m{C}}T(\c{M})) \rightarrow M_{n}(\mathbb{C})$
by $S_{{\mathcal M}}(T)(e_{i})=\sum^{n}_{j=1}a_{ij}e_{j}$, 
where $T\varphi_{i}=\sum^{n}_{j=1}a_{ij}\varphi_{j}$, where $e_{i}$ is a canonical basis of $\m{C}^{n}$.  
Then $F(\c{M})=S_{\c{M}}(F^{tr}(\c{M}))$.   
\end{Def}

This fundamental group is a "numerical invariant".  
In the case of ${II}_{1}$-factor, the group is completely numerical invariant.  
In other words, if $F(\c{M})$ and $F(\c{N})$ are different as a set, then $\c{M}$ and $\c{N}$ are not isomorphic (not Morita equivalent).  
This is similar with the case of the $C^{*}$-algebras with finite dimensional trace space.  
Let $\c{M}$ and $\c{N}$ be finite von Neumann algebras with finite dimensional normal trace space. 
The definition of the fundamental group $F(\c{M})$ depends on the permutation of $\partial_{e}T(\c{M})$ if $\sharp(\partial_{e}T(\c{M}))>1$.  
So, we introduce the concept of being isomorphic and that of being weightedly isomorphic on the fundamental groups $F(\c{M})$
considering the fact.  
We define the canonical unitary representation $U$ from a symmetric group $S_{n}$ into $M_{n}(\mathbb{C})$ 
by $U(\sigma)_{ij}=1$ if $j=\sigma(i)$ and $U(\sigma)_{ij}=0$ if $j\neq \sigma(i)$.  

\begin{Def}\label{def:isomorphism}
Let ${\mathcal M}$ and ${\mathcal N}$ be finite von Neumann algebras with finite dimensional normal trace space.  
satisfying $\sharp(\partial_{e}T(\c{M}))=\sharp(\partial_{e}T(\c{N}))=n$.  
We call that $F({\mathcal M})$ is isomorphic to $F({\mathcal N})$ 
if there exists a permutation $\sigma$ in $S_{n}$ 
such that $F({\mathcal N})=(U(\sigma))^{-1}F({\mathcal M})(U(\sigma))$.  
We call that $F({\mathcal M})$ is weightedly isomorphic to $F({\mathcal N})$ 
if there exists an invertible positive diagonal matrix $D$ in $M_{n}(\mathbb{C})$ 
and a permutation $\sigma$ in $S_{n}$ 
such that $F({\mathcal N})=(DU(\sigma))^{-1}F({\mathcal M})(DU(\sigma))$.    
\end{Def}
\begin{pro}\label{pro:Ciso}
If two finite von Neumann algebras, 
which have finite dimensional normal trace space, 
are isomorphic, 
then their fundamental groups are same up to permutation of basis.   
\end{pro}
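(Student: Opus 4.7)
The plan is to track how a $*$-isomorphism $\Psi \colon \c{M} \to \c{N}$ transports the combinatorial data defining $F^{tr}$, and to read the resulting identification off in matrix coordinates.

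As a first step I will identify the permutation of basis coming from $\Psi$. By Lemma \ref{lem:finite}, both algebras split as finite direct sums of finite factors $\c{M} = \oplus_{i=1}^{n} \c{M}_i$ and $\c{N} = \oplus_{j=1}^{n} \c{N}_j$, with the same $n$ since $\Psi$ restricts to an isomorphism of centers. Because minimal central projections are preserved, $\Psi$ induces a permutation $\sigma \in S_{n}$ with $\Psi(\c{M}_i) = \c{N}_{\sigma(i)}$; by uniqueness of the normalized trace on a finite factor, the extremal traces then satisfy $\varphi_i^{\c{M}} = \varphi_{\sigma(i)}^{\c{N}} \circ \Psi$.

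Next I will transport self-similar triples along $\Psi$. Given an s.s.t.\ $(k,p,\Phi)$ for $\c{M}$, I set $\widetilde{p} = (\mathrm{id}_{M_k} \otimes \Psi)(p)$ and $\widetilde{\Phi} = (\mathrm{id}_{M_k} \otimes \Psi) \circ \Phi \circ \Psi^{-1}$. Then $(k,\widetilde{p},\widetilde{\Phi})$ is manifestly an s.s.t.\ for $\c{N}$, and the assignment is a bijection between the s.s.t.s of the two algebras. Using the functoriality $(\mathrm{Tr}_k \otimes \varphi_{\sigma(i)}^{\c{N}}) \circ (\mathrm{id}_{M_k} \otimes \Psi) = \mathrm{Tr}_k \otimes \varphi_i^{\c{M}}$, a short computation shows that if $T_{(k,p,\Phi)}\varphi_i^{\c{M}} = \sum_j a_{ij}\, \varphi_j^{\c{M}}$, then $T_{(k,\widetilde{p},\widetilde{\Phi})}\varphi_{\sigma(i)}^{\c{N}} = \sum_j a_{ij}\, \varphi_{\sigma(j)}^{\c{N}}$.

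Finally I will read off the matrices. Letting $A = S_{\c{M}}(T_{(k,p,\Phi)})$ and $B = S_{\c{N}}(T_{(k,\widetilde{p},\widetilde{\Phi})})$, the relation above becomes $B_{\sigma(i),\sigma(j)} = A_{ij}$. With the paper's convention $U(\sigma)_{ij} = \delta_{j,\sigma(i)}$ this unwraps to $B = U(\sigma)^{-1} A\, U(\sigma)$; combined with the s.s.t.\ bijection it yields $F(\c{N}) = U(\sigma)^{-1} F(\c{M})\, U(\sigma)$, which is exactly the conclusion demanded by Definition \ref{def:isomorphism}. I expect no deep obstacle here: the real work is bookkeeping to keep $\sigma$ versus $\sigma^{-1}$ consistent, in particular installing the identification $\varphi_i^{\c{M}} = \varphi_{\sigma(i)}^{\c{N}} \circ \Psi$ in the correct direction, after which everything follows from formal functoriality of the amplification $\mathrm{id}_{M_k}\otimes \Psi$ together with uniqueness of the normalized trace on a finite factor.
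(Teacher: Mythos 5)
Your proposal is correct and follows essentially the same route as the paper: the paper's (very terse) proof simply observes that the isomorphism $\alpha$ carries the extremal trace basis $\{\varphi_i\}$ of one algebra to that of the other up to a permutation, which is exactly your first step. The remainder of your argument — transporting self-similar triples through the amplified isomorphism and checking that the matrix representations are conjugate by $U(\sigma)$ — is the bookkeeping the paper leaves implicit, and your computation of it is sound.
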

\begin{proof}
Let ${\mathcal M}$ and ${\mathcal N}$ be finite von Neumann algebras with finite dimensional normal trace space.  
We suppose $\sharp(\partial_{e}T(\c{M}))=n$.  
Say $\{\varphi_{i}\}^{n}_{i=1}=\partial_{e}T(\c{M})$.     
If ${\mathcal M}$ and ${\mathcal N}$ are isomorphic, 
there exists an isomorphism $\alpha:{\mathcal M}\rightarrow{\mathcal N}$.  
Then $\{\varphi_{i}\circ \alpha \}^{n}_{i=1}=\partial_{e}T(\c{N})$.  
\end{proof}

Let $\c{M}$ and $\c{N}$ be finite von Neumann algebras with finite dimensional normal trace space.  
We shall show that if $\c{M}$ and $\c{N}$ are Morita equivalent, 
then their fundamental groups $F(\c{M})$ and $F(\c{N})$ are weightedly isomorphic.  
If ${\mathcal M}$ and ${\mathcal N}$ are Morita equivalent, 
then there exists an ${\mathcal M}$-${\mathcal N}$ equivalence bimodule ${\mathcal F}$.  
We define the linear map 
$R_{{\mathcal M}{\mathcal N}}:{}_{\mathcal M}E_{\mathcal N}\rightarrow {\mathcal L}({\rm lin}_{\m{C}}NT(\c{N}),{\rm lin}_{\m{C}}NT(\c{M}))$ 
by $(R_{{\mathcal M}{\mathcal N}}([{\mathcal F}])(\varphi))(a)
=\sum_{i=1}^{k}\varphi(\langle \xi_{i}, a\xi_{i} \rangle_{{\mathcal M}})$, 
where $\{\xi_{i}\}_{i=1}^{k}$ is a basis of ${\mathcal F}$ as a right Hilbert ${\mathcal N}$-module.    
\begin{lem}\label{lem:hom}
Let ${\mathcal L}$, ${\mathcal M}$ and ${\mathcal N}$ be finite von Neumann algebras with finite dimensional normal trace space. 
We suppose ${\mathcal L}$, ${\mathcal M}$ and ${\mathcal N}$ are Morita equivalent.  
Let ${\mathcal E}$ be an ${\mathcal L}$-${\mathcal M}$ equivalence bimodule 
and let ${\mathcal F}$ be a ${\mathcal M}$-${\mathcal N}$ equivalence bimodule. \\
Then $R_{{\mathcal L}{\mathcal M}}([{\mathcal E}])R_{{\mathcal M}{\mathcal N}}([{\mathcal F}])
=R_{{\mathcal L}{\mathcal N}}([{\mathcal E}\otimes{\mathcal F}])$.  
In particular,  
$R_{{\mathcal L}{\mathcal M}}([{\mathcal F}])R_{{\mathcal M}{\mathcal L}}([{\mathcal F}^*])=id_{{\rm lin}_{\m{C}}T(\c{L})}$, $R_{{\mathcal M}{\mathcal L}}([{\mathcal F}^*])R_{{\mathcal L}{\mathcal M}}([{\mathcal F}])=id_{{\rm lin}_{\m{C}}T({\mathcal M})}$ and $\sharp(\partial_{e}T(\c{L}))=\sharp(\partial_{e}T(\c{M}))$.   
\end{lem}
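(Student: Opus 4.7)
The plan is to generalize the proof of Proposition~\ref{pro:mul} from the case of a single von Neumann algebra to equivalence bimodules between three (possibly distinct) Morita equivalent algebras. First I would verify, exactly as in that proposition, that $R_{\c{L}\c{M}}([\c{E}])(\varphi)$ is independent of the choice of right $\c{M}$-basis of $\c{E}$ and of the isomorphism class: the manipulation expands one basis in terms of another and cyclically permutes the two inner-product factors under $\varphi$, which is a trace on $\c{M}$, so the computation carries over with no change. Existence of a finite right basis for $\c{E}$ as a right Hilbert $\c{M}$-module follows from the argument of Lemma~\ref{lem:full} applied to $\c{M}$ in the codomain role, since $\c{M}$ is itself a finite direct sum of finite factors by Lemma~\ref{lem:finite}.

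Next, to establish $R_{\c{L}\c{M}}([\c{E}])R_{\c{M}\c{N}}([\c{F}]) = R_{\c{L}\c{N}}([\c{E}\otimes_{\c{M}}\c{F}])$, I would fix a right $\c{M}$-basis $\{\xi_{i}\}_{i=1}^{k}$ of $\c{E}$ and a right $\c{N}$-basis $\{\eta_{j}\}_{j=1}^{l}$ of $\c{F}$, so that $\{\xi_{i}\otimes\eta_{j}\}$ is a right $\c{N}$-basis of $\c{E}\otimes_{\c{M}}\c{F}$. Using the standard identity $\braket{\xi\otimes\eta,\ \xi'\otimes\eta'}_{\c{N}} = \braket{\eta,\ \braket{\xi,\xi'}_{\c{M}}\eta'}_{\c{N}}$, both sides of the claimed equation, evaluated at $a\in\c{L}$, collapse to $\sum_{i,j}\varphi(\braket{\eta_{j},\ \braket{\xi_{i},a\xi_{i}}_{\c{M}}\eta_{j}}_{\c{N}})$, by the same calculation as in Proposition~\ref{pro:mul}. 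No new idea is required.

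The identities $R_{\c{L}\c{M}}([\c{F}])R_{\c{M}\c{L}}([\c{F}^{*}]) = \r{id}$ and $R_{\c{M}\c{L}}([\c{F}^{*}])R_{\c{L}\c{M}}([\c{F}]) = \r{id}$ then follow at once by combining multiplicativity with the standard bimodule isomorphisms $\c{F}\otimes_{\c{M}}\c{F}^{*} \cong \c{L}$ and $\c{F}^{*}\otimes_{\c{L}}\c{F} \cong \c{M}$, together with the evident equality $R_{\c{L}\c{L}}([\c{L}]) = \r{id}$ from Proposition~\ref{pro:mul}. In particular $R_{\c{L}\c{M}}([\c{F}])$ is a linear isomorphism from ${\rm lin}_{\m{C}}T(\c{M})$ onto ${\rm lin}_{\m{C}}T(\c{L})$, so these spaces have the same dimension. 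Since that dimension equals the number of finite-factor summands in each algebra, and hence equals the number of extremal tracial states by the remark following Lemma~\ref{lem:finite}, the equality $\sharp(\partial_{e}T(\c{L})) = \sharp(\partial_{e}T(\c{M}))$ is immediate.

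There is no serious obstacle; the lemma is a mechanical upgrade of Proposition~\ref{pro:mul} to the Morita-equivalent setting. The only technical point to keep in view is the existence of finite right bases for equivalence bimodules between distinct algebras, which is where the finite-dimensional normal trace space hypothesis enters: it is handled by reusing Lemma~\ref{lem:full} with the relevant right-inner-product codomain playing the role of $\c{M}$.
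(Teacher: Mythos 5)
Your proposal is correct and follows essentially the same route as the paper: the paper's own proof simply states that well-definedness and multiplicativity "will be showed similarly with the proof of \ref{pro:mul}", which is exactly the reduction you carry out. You additionally spell out the "in particular" consequences (the inverse identities via $\c{F}\otimes_{\c{M}}\c{F}^{*}\cong\c{L}$ and the dimension count for $\sharp(\partial_{e}T(\c{L}))=\sharp(\partial_{e}T(\c{M}))$), which the paper leaves implicit.
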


\begin{proof}
The independence of the choice of the basis of ${\mathcal F}$ and the well-definedness of $R_{\c{L}\c{M}}([\c{F}])$ will be showed similarly 
with the proof of the definition of $R_{\c{L}}([\c{E}])$ in \ref{pro:mul}.   
Moreover, $R_{{\mathcal L}{\mathcal M}}([{\mathcal E}])R_{{\mathcal M}{\mathcal N}}([{\mathcal F}])
=R_{{\mathcal L}{\mathcal N}}([{\mathcal E}\otimes{\mathcal F}])$ will be showed similarly 
with the proof of the multiplicativity of $R_{\c{L}}([\c{E}])$ in \ref{pro:mul}.  
\end{proof}

Especially,  $R_{{\mathcal L}{\mathcal L}}([{\mathcal E}])=R_{\mathcal L}([{\mathcal E}])$ 
where ${\mathcal E}$ is an ${\mathcal L}$-${\mathcal L}$ equivalence bimodule.  

\begin{pro}\label{pro:morita}
Let ${\mathcal M}$ and ${\mathcal N}$ be finite von Neumann algebras with finite dimensional normal trace space. 
If ${\mathcal M}$ and ${\mathcal N}$ are Morita equivalent,  
then $F({\mathcal M})$ is weightedly isomorphic to $F({\mathcal N})$.  
\end{pro}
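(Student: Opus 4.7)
The plan is to take an $\mathcal{M}$-$\mathcal{N}$ equivalence bimodule $\mathcal{F}$ witnessing the Morita equivalence, set $T := R_{\mathcal{M}\mathcal{N}}([\mathcal{F}])$, and show that (i) conjugation by $T$ takes $F^{tr}(\mathcal{N})$ onto $F^{tr}(\mathcal{M})$, and (ii) the matrix of $T$ with respect to the bases of extreme tracial states has the form $DU(\sigma)$ with $D$ an invertible positive diagonal matrix and $\sigma$ a permutation. Together these give the weighted isomorphism $F(\mathcal{N}) = (DU(\sigma))^{-1} F(\mathcal{M})(DU(\sigma))$.

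For (i), I would move equivalence bimodules across the Morita equivalence by tensoring with $\mathcal{F}$ and $\mathcal{F}^*$. Given any $\mathcal{N}$-$\mathcal{N}$ equivalence bimodule $\mathcal{X}$, the bimodule $\mathcal{F} \otimes \mathcal{X} \otimes \mathcal{F}^*$ is an $\mathcal{M}$-$\mathcal{M}$ equivalence bimodule, and Lemma \ref{lem:hom} yields
\[
R_{\mathcal{M}}([\mathcal{F} \otimes \mathcal{X} \otimes \mathcal{F}^*]) = T\, R_{\mathcal{N}}([\mathcal{X}])\, T^{-1}.
\]
By Proposition \ref{pro:extfundamental group} this gives $T\, F^{tr}(\mathcal{N})\, T^{-1} \subseteq F^{tr}(\mathcal{M})$; running the same argument with $\mathcal{F}^*$ in place of $\mathcal{F}$ gives the reverse inclusion, so equality holds.

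For (ii), by Lemma \ref{lem:finite} and Lemma \ref{lem:hom} write $\mathcal{M} = \oplus_{i=1}^n \mathcal{M}_i$ and $\mathcal{N} = \oplus_{j=1}^n \mathcal{N}_j$ with equally many summands, and let $e_i, f_j$ be the corresponding central projections. The key observation is that the left $\mathcal{Z}(\mathcal{M})$-action and the right $\mathcal{Z}(\mathcal{N})$-action on $\mathcal{F}$ are intertwined by a $*$-isomorphism $\mathcal{Z}(\mathcal{M}) \cong \mathcal{Z}(\mathcal{N})$ (deduced from the imprimitivity identity ${}_{\mathcal{M}}\langle x, y\rangle z = x\langle y, z\rangle_{\mathcal{N}}$); in our finite-dimensional setting this isomorphism is realized by a permutation $\sigma \in S_n$, so that $e_i \xi = \xi f_{\sigma^{-1}(i)}$ for every $\xi \in \mathcal{F}$. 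Consequently $\mathcal{F} = \oplus_i e_i \mathcal{F}$, each $e_i \mathcal{F}$ being an $\mathcal{M}_i$-$\mathcal{N}_{\sigma^{-1}(i)}$ equivalence bimodule, and a right basis of $\mathcal{F}$ is obtained by concatenating right bases of the summands.

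Applying $T$ to $\varphi_j^{\mathcal{N}}$ and tracking summands, only $e_{\sigma(j)}\mathcal{F}$ contributes (because $\langle \xi, a\xi\rangle_{\mathcal{N}} \in \mathcal{N}_{\sigma^{-1}(i)}$ for $\xi \in e_i\mathcal{F}$, and $\varphi_j^{\mathcal{N}}$ vanishes on $\mathcal{N}_k$ for $k \neq j$), and what remains is a positive scalar multiple of the factor trace $\varphi_{\sigma(j)}^{\mathcal{M}}$, say $T\varphi_j^{\mathcal{N}} = c_j\, \varphi_{\sigma(j)}^{\mathcal{M}}$ with $c_j > 0$. In matrix form this is exactly $[T] = DU(\sigma)$ with $D = \operatorname{diag}(c_1, \dots, c_n)$, and substituting into the conjugation relation from (i) finishes the proof. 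The main obstacle I anticipate is justifying the central intertwining used in (ii); once that is in hand the rest reduces to bookkeeping with the decomposition of $\mathcal{F}$.
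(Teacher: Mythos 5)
Your proof is correct, and your step (i) coincides with the paper's argument: both move $F^{tr}$ across the Morita equivalence via the Picard-group isomorphism $[\mathcal{E}]\mapsto[\mathcal{F}^{*}\otimes\mathcal{E}\otimes\mathcal{F}]$ together with Lemma \ref{lem:hom} and Proposition \ref{pro:extfundamental group}. Where you genuinely diverge is step (ii), the claim that the matrix of $T=R_{\mathcal{M}\mathcal{N}}([\mathcal{F}])$ is of the form $DU(\sigma)$. The paper obtains this by pure linear algebra: $T$ and its inverse $R_{\mathcal{N}\mathcal{M}}([\mathcal{F}^{*}])$ both preserve positivity of traces, so both representing matrices have nonnegative entries, and an invertible matrix with nonnegative entries whose inverse also has nonnegative entries must be monomial (Lemma 4.5 of \cite{TK}). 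You instead argue structurally: the equivalence bimodule induces an isomorphism $\mathcal{Z}(\mathcal{M})\cong\mathcal{Z}(\mathcal{N})$ intertwining the two central actions, hence in the finite-dimensional case a permutation $\sigma$ with $e_{i}\xi=\xi f_{\sigma^{-1}(i)}$, so that $\mathcal{F}=\oplus_{i}e_{i}\mathcal{F}$ with each $e_{i}\mathcal{F}$ an $\mathcal{M}_{i}$-$\mathcal{N}_{\sigma^{-1}(i)}$ equivalence bimodule, and the uniqueness of the normal tracial state on each finite factor then forces $T\varphi_{j}^{\mathcal{N}}=c_{j}\varphi_{\sigma(j)}^{\mathcal{M}}$ with $c_{j}>0$. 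Your route is self-contained (it does not import the external lemma from \cite{TK}) and yields more information, identifying $\sigma$ as the Rieffel correspondence of minimal central projections and the weights $c_{j}$ as the coupling data of the summands; the price is the central intertwining you correctly flag as needing justification, which is the standard fact that an equivalence bimodule induces an isomorphism of centers, deducible from ${}_{\mathcal{M}}\langle x,y\rangle z=x\langle y,z\rangle_{\mathcal{N}}$ and fullness. The paper's route is shorter once the cited lemma is granted, but treats $T$ as a black box and does not exhibit $\sigma$ or $D$ explicitly.
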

\begin{proof}
Let ${\mathcal F}$ be an ${\mathcal M}$-${\mathcal N}$ imprimitivity bimodule.  
Then ${\mathcal F}$ induces an isomorphism ${\Psi}$ of $\r{Pic}({\mathcal M})$ to $\r{Pic}({\mathcal N})$ 
such that ${\Psi}([{\mathcal E}])=[{\mathcal F}^*\otimes{\mathcal E}\otimes{\mathcal F}]$ 
for $[{\mathcal E}]\in \r{Pic}({\mathcal M})$.  By \ref{lem:hom}, for $[{\mathcal E}] \in \r{Pic}({\mathcal M})$ 
\begin{eqnarray} 
R_{\mathcal N}([{\mathcal F}^*\otimes{\mathcal E}\otimes{\mathcal F}])&=&R_{{\mathcal N}{\mathcal N}}([{\mathcal F}^*\otimes{\mathcal E}\otimes{\mathcal F}]) \nonumber \\
&=&R_{{\mathcal N}{\mathcal M}}([{\mathcal F}^*])R_{{\mathcal M}{\mathcal M}}([{\mathcal E}])R_{{\mathcal M}{\mathcal N}}([{\mathcal F}]) \nonumber \\
&=&R_{{\mathcal N}{\mathcal M}}([{\mathcal F}^*])R_{\mathcal M}([{\mathcal E}])R_{{\mathcal M}{\mathcal N}}([{\mathcal F}]) \nonumber
\end{eqnarray}
Put $\sharp(\partial_{e}NT(\c{M}))=\sharp(\partial_{e}NT(\c{N}))=n$.  \ 
We \ shall \ consider the representation of the previous formula into $M_{n}(\mathbb{C})$ 
with the basis $\partial_{e}T(\c{M})$ of ${\rm lin}_{\m{C}}T({\mathcal M})$ 
and with the basis $\partial_{e}T(\c{N})$ of ${\rm lin}_{\m{C}}T({\mathcal N})$.  
Then the representation matrices of $R_{\mathcal M}([{\mathcal E}])$ 
and $R_{\mathcal N}([\Psi({\mathcal E})])$ are $S_{\mathcal M}(R_{\mathcal M}([{\mathcal E}]))$ 
and $S_{\mathcal N}(R_{\mathcal N}([\Psi({\mathcal E})]))$ respectively.  
By the definition of $R_{{\mathcal M}{\mathcal N}}({\mathcal F})$ 
and $R_{{\mathcal N}{\mathcal M}}({\mathcal F}^*)$, 
they preserve positiveness of traces, 
so the entries of the matrices which are represented by them are positive.  
Moreover each of them is the inverse element of the other by \ref{lem:hom}.  
Therefore the representation matrix of $R_{{\mathcal A}{\mathcal B}}({\mathcal F})$ satisfies the hypothesis of lemma 4.5 of \cite{TK}.  
Hence there exists a positive invertible diagonal matrix $D$ 
and exists $\sigma$ in $S_{n}$ 
such that $S_{\mathcal N}(R_{\mathcal N}([\Psi({\mathcal E})]))
=(DU(\sigma))^{-1}S_{\mathcal M}(R_{\mathcal M}({\mathcal F}))(DU(\sigma))$.  \\
\end{proof}

\section{The form of fundamental groups}

Let $\c{M}$ be a finite von Neumann algebra with finite dimensional normal trace space.  
Then $\c{M}$ is a direct sum of ${\rm II}_{1}$-factors and matrices by \ref{lem:finite}
Therefore we consider the case that $\c{M}$ is a direct sum of ${\rm II}_{1}$-factors.  
  
Let $\c{M}$ be a finite direct sum of ${\rm II}_{1}$-factors $\c{M}_{i}$.  
We consider the form of the fundamental group $F(\c{M})$.  
As in the same with \cite{TK}, we can show that for any $X$ in $F(\c{M})$ there exists a diagonal $D$ and a permutation unitary $U$ such that $X=DU$.  
However, in this case, we can show the fact more easily and we can show more detailed form of the elements of fundamental groups.  
\begin{lem}\label{lem:multi0}
Let $\c{M}$ be a finite direct sum of ${\rm II}_{1}$-factors $\c{M}_{i}$ 
and let $\varphi_{i}$ be a normalized trace on $\c{M}_{i}$.  
If $X\in F(\c{M})$, then there exists a diagonal $D$ and a permutation unitary $U$ such that $X=DU$.  \\
Put $G_{i}=\set{{\rm Tr}_{n}\otimes\varphi_{i}(p_{i})\mid p_{i}:{\rm  self-similar\ projection\ of}\  M_{n}(\c{M}_{i})}$\\
and put $S_{ij}=\set{{\rm Tr}_{n} \otimes \varphi_{j}(p_{j})\mid p_{j}: {\rm projection\ of} \ M_{n}(\c{M}_{j})\ {\rm s.t.} \ \c{M}_{i}\cong p_{j}M_{n}(\c{M}_{j})p_{j}}$.  
Then $G_{i}=\set{X_{ii}\mid X\in F(\c{M})\ X_{ii}\neq 0}$, $S_{ij}=\set{X_{ij}\mid X\in F(\c{M})\ X_{ij}\neq 0}$.  
\end{lem}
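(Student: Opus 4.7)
The plan is to decompose any self-similar triple of $\c{M}$ along the factor decomposition $\c{M}=\oplus_{i=1}^{n}\c{M}_{i}$ and exploit the uniqueness of the normalized trace on each ${\rm II}_{1}$-factor.

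Let $X=S_{\c{M}}(T_{(k,p,\Phi)})\in F(\c{M})$ come from a self-similar triple $(k,p,\Phi)$. Since $M_{k}(\c{M})=\oplus_{i}M_{k}(\c{M}_{i})$, the projection decomposes as $p=\oplus_{i}p_{i}$ with $p_{i}\in M_{k}(\c{M}_{i})$, and $pM_{k}(\c{M})p=\oplus_{i}p_{i}M_{k}(\c{M}_{i})p_{i}$. As $\Phi$ is a $*$-isomorphism between two finite direct sums of ${\rm II}_{1}$-factors, it must permute the factor summands (it sends minimal central projections to minimal central projections): there exists $\sigma\in S_{n}$ such that $\Phi$ restricts to an isomorphism $\c{M}_{j}\to p_{\sigma(j)}M_{k}(\c{M}_{\sigma(j)})p_{\sigma(j)}$ for every $j$, and in particular every $p_{\sigma(j)}$ is non-zero.

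For $a\in\c{M}_{j}$, the element $\Phi(a)$ lies in the $\sigma(j)$-th summand of $M_{k}(\c{M})$, so $({\rm Tr}_{k}\otimes\varphi_{i})(\Phi(a))$ vanishes unless $\sigma(j)=i$; uniqueness of the normalized trace on $\c{M}_{i}$ then yields
\[
T_{(k,p,\Phi)}\varphi_{i} \;=\; ({\rm Tr}_{k}\otimes\varphi_{i})(p_{i})\cdot\varphi_{\sigma^{-1}(i)}.
\]
Thus the matrix of $T_{(k,p,\Phi)}$ in the basis $\{\varphi_{j}\}$ has at most one non-zero entry per row, and invertibility of $X$ (since $F(\c{M})$ is a group) forces exactly one non-zero entry per row and per column, so $X=DU$ for a positive invertible diagonal $D$ and a permutation unitary $U$. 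The entry-set characterisation is then immediate from the displayed formula: $X_{ii}\neq 0$ forces $\sigma(i)=i$, giving $\c{M}_{i}\cong p_{i}M_{k}(\c{M}_{i})p_{i}$ and $X_{ii}={\rm Tr}_{k}\otimes\varphi_{i}(p_{i})\in G_{i}$, while $X_{ij}\neq 0$ forces $\sigma(i)=j$, giving $\c{M}_{i}\cong p_{j}M_{k}(\c{M}_{j})p_{j}$ and $X_{ij}={\rm Tr}_{k}\otimes\varphi_{j}(p_{j})\in S_{ij}$.

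The reverse inclusions require explicit constructions of self-similar triples. Given a self-similar $p_{i}\in M_{k}(\c{M}_{i})$ with isomorphism $\Phi_{i}$, take $\sigma={\rm id}$, set the $i$-th block to $(p_{i},\Phi_{i})$, and use the matrix unit $e_{11}$ with the natural isomorphism for every $\ell\neq i$; the resulting $X$ has $X_{ii}={\rm Tr}_{k}\otimes\varphi_{i}(p_{i})$. For a datum in $S_{ij}$ with $\c{M}_{i}\cong p_{j}M_{n}(\c{M}_{j})p_{j}$, the induced Morita equivalence of the ${\rm II}_{1}$-factors $\c{M}_{i}$ and $\c{M}_{j}$ furnishes a projection $q\in M_{m}(\c{M}_{i})$ with $\c{M}_{j}\cong qM_{m}(\c{M}_{i})q$; taking $k=\max(n,m)$ and embedding both projections (together with $(e_{11},{\rm id})$ for the remaining factors) into a single triple whose underlying permutation is the transposition $(i,j)$ realises $X\in F(\c{M})$ with $X_{ij}$ equal to the prescribed value. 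The main obstacle I expect is the bookkeeping between the permutation induced by $\Phi$ and the row/column convention used in $S_{\c{M}}$, together with checking that enlarging $k$ does not disturb the trace of the prescribed projection; beyond that, everything reduces to uniqueness of trace on a factor and standard corner manipulations.
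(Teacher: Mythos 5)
Your argument is essentially the paper's own proof: decompose $p=\oplus_i p_i$ inside $M_k(\c{M})=\oplus_i M_k(\c{M}_i)$, observe that the isomorphism $\Phi$ permutes the factor summands via some $\sigma\in S_n$, and use uniqueness of the normalized trace on each ${\rm II}_1$-factor to get $T_{(k,p,\Phi)}\varphi_i=({\rm Tr}_k\otimes\varphi_i)(p_i)\,\varphi_{\sigma^{-1}(i)}$, hence $X=DU$ with the entries identified as traces of the blocks $p_i$. The only differences are that you also prove the reverse inclusions $G_i\subseteq\{X_{ii}\}$ and $S_{ij}\subseteq\{X_{ij}\}$ by explicit block constructions (the paper records only the forward inclusion here and produces the diagonal-block construction later, in the proof of Proposition \ref{pro:2dimf}), and the $\sigma$ versus $\sigma^{-1}$ row/column bookkeeping you flag, which is a harmless convention issue present in the paper's own wording as well.
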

\begin{proof} 
Let $p$ be a self-similar full projection of $M_{n}(\c{M})$.  
Put $p=\oplus^{n}_{i=1}p_{i}$ where $p_{i}$ is a projection of $M_{n}(\c{M}_{i})$.  
Then there exists a permutation $\sigma$ on $\set{1,2,\cdot \cdot \cdot ,n}$ such that $\c{M}_{\sigma(i)}\cong p_{i}M_{n}(\c{M}_{i})p_{i}$.  
We denote by $\Phi_{i}$ the isomorphism from $\c{M}_{i}$ onto $p_{\sigma(i)}M_{n}(\c{M}_{\sigma(i)})p_{\sigma(i)}$.  
Then $({\rm Tr}_{n}\otimes\varphi_{\sigma(i)})\circ \Phi_{i}=\lambda_{i}\varphi_{i}$ for some $\lambda_{i}$.  
Therefore $X_{i\sigma(i)}=\lambda_{i}$ and $X=DU_{\sigma}$, where $D$ is a diagonal matrix which satisfies $D_{ii}=\lambda_{i}$.  
Since ${\rm Tr}_{n}\otimes \varphi_{\sigma(i)}(p_{\sigma(i)})=R_{M}([p\c{M}^{n}])(\varphi_{i})(1)=\lambda_{i}$ by \ref{pro:extfundamental group}, ${\rm Tr}_{n}\otimes \varphi_{\sigma(i)}(p_{\sigma(i)})=X_{i\sigma(i)}$.   
\end{proof}

\begin{lem}\label{lem:multi1}
Let $\c{M}$, $\c{M}_{i}$, $\varphi_{i}$, $G_{i}$, $S_{ij}$ as above.  
If $S_{ij}$ and $S_{jk}$ are nonempty for some $i$, $j$, $k$, then $S^{-1}_{ij}=S_{ji}$ and  $S_{ij}S_{jk}=S_{ik}$.  
\end{lem}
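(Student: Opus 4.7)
The plan is to exploit the rigid form of elements of $F(\c{M})$ supplied by Lemma \ref{lem:multi0}: every $X\in F(\c{M})$ can be written as $X=DU_{\sigma}$ for some diagonal $D$ and permutation $\sigma$, so each row and each column of $X$ contains exactly one nonzero entry. Lemma \ref{lem:multi0} also identifies $S_{ij}$ with the set of nonzero $(i,j)$-entries attainable by elements of $F(\c{M})$. With these two facts in hand, the identities reduce to tracking which entries are nonzero under products and inverses in $F(\c{M})$.

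For the claim $S_{ij}^{-1}=S_{ji}$, I would pick $a\in S_{ij}$ realised as $a=X_{ij}$ with $X=DU_{\sigma}\in F(\c{M})$; the condition $X_{ij}\neq 0$ forces $\sigma(i)=j$ and $D_{ii}=a$. Since $F(\c{M})$ is a group, $X^{-1}=U_{\sigma^{-1}}D^{-1}\in F(\c{M})$, and a direct entry computation gives $(X^{-1})_{ji}=D_{ii}^{-1}=1/a$, so that $1/a\in S_{ji}$. The reverse inclusion $S_{ji}^{-1}\subseteq S_{ij}$ follows by exchanging the roles of $i$ and $j$.

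For $S_{ij}S_{jk}=S_{ik}$, the forward inclusion uses the single-nonzero-per-row structure: if $a=X_{ij}\in S_{ij}$ and $b=Y_{jk}\in S_{jk}$, then row $i$ of $X$ has its unique nonzero entry at column $j$ and row $j$ of $Y$ has its unique nonzero entry at column $k$, so $(XY)_{ik}=X_{ij}Y_{jk}=ab\neq 0$, and since $XY\in F(\c{M})$ we conclude $ab\in S_{ik}$. For the reverse inclusion, given $c=Z_{ik}\in S_{ik}$, I would pick any $a=X_{ij}\in S_{ij}$ (which exists by hypothesis) and set $Y=X^{-1}Z\in F(\c{M})$. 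Then $XY=Z$, and the same single-nonzero argument applied to the product gives $X_{ij}Y_{jk}=Z_{ik}$, whence $Y_{jk}=c/a\neq 0$, so $c/a\in S_{jk}$ and $c=a\cdot(c/a)\in S_{ij}S_{jk}$.

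The only subtlety is the bookkeeping on which entries are forced to be nonzero or zero in light of the $DU_{\sigma}$ form; once that is laid out cleanly, the identities are formal. The hypothesis that both $S_{ij}$ and $S_{jk}$ are nonempty is what lets us actually pick permutations sending $i\mapsto j$ and $j\mapsto k$, which makes the composition land in the correct $(i,k)$ slot and incidentally shows $S_{ik}$ is also nonempty.
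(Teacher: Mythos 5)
Your proof is correct. The paper itself disposes of this lemma with the single line ``By definition of $S_{ij}$,'' and a later remark explains that the intended mechanism is the internal tensor product and the dual module of equivalence bimodules: given a projection $p_{j}\in M_{n}(\c{M}_{j})$ witnessing $\lambda\in S_{ij}$ and a projection $q_{k}\in M_{m}(\c{M}_{k})$ witnessing $\mu\in S_{jk}$, one composes the isomorphisms (equivalently, tensors the corresponding $\c{M}_{i}$--$\c{M}_{j}$ and $\c{M}_{j}$--$\c{M}_{k}$ bimodules) to produce a projection in $M_{nm}(\c{M}_{k})$ witnessing $\lambda\mu\in S_{ik}$, and passes to the dual bimodule for the inverse. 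You instead work entirely inside the matrix group: you take the identification $S_{ij}=\set{X_{ij}\mid X\in F(\c{M}),\ X_{ij}\neq 0}$ and the $DU_{\sigma}$ form from Lemma \ref{lem:multi0} as given, and then the two identities become bookkeeping about which single entry in each row survives under products and inverses in $F(\c{M})$. Both routes are sound; yours is more self-contained once Lemma \ref{lem:multi0} is in hand and makes the group-theoretic content transparent (in particular your trick $Y=X^{-1}Z$ for the reverse inclusion $S_{ik}\subseteq S_{ij}S_{jk}$ is cleaner than constructing a bimodule realizing $c/a$ directly), while the paper's route stays at the level of projections and isomorphisms and so does not depend on first knowing that $F(\c{M})$ is a group of monomial matrices. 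One small point worth making explicit in your write-up: the diagonal factor $D$ in $X=DU_{\sigma}$ is invertible because $F(\c{M})\subset GL_{n}(\m{R})$, which is what guarantees ``exactly one nonzero entry per row and column'' rather than ``at most one.''
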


\begin{proof}
By definition of $S_{ij}$.  
\end{proof}

\begin{lem}\label{lem:multi2}
Let $\c{M}$, $\c{M}_{i}$, $\varphi_{i}$, $G_{i}$, $S_{ij}$ as above.  
Then $G_{i}$ is a group.  
Moreover, if $S_{ij}$ is nonempty for some $i$, $j$, then $G_{i}S_{ij}=S_{ij}$, $G_{i}=G_{j}$, and there exists a positive real number $r_{j}$ such that $S_{ij}=r_{j}G_{i}$.  
\end{lem}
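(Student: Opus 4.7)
The key observation is that $G_i$ coincides with $S_{ii}$: by Definition \ref{def:s.s}, a self-similar projection of $M_n(\c{M}_i)$ is exactly a projection $p_i$ satisfying $\c{M}_i \cong p_i M_n(\c{M}_i) p_i$. Once this identification is made, the entire lemma becomes a formal consequence of the set identities $S_{ij}^{-1} = S_{ji}$ and $S_{ij}S_{jk} = S_{ik}$ from Lemma \ref{lem:multi1}, combined with the fact that $\m{R}_+^{\times}$ is abelian.

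To show that $G_i$ is a group, I would set $i = j = k$ in Lemma \ref{lem:multi1}, obtaining closure $G_i G_i = G_i$ and stability under inversion $G_i^{-1} = G_i$. The identity $1 \in G_i$ is witnessed by the trivial self-similar triple $(1, 1_{\c{M}_i}, \r{id})$, for which $\varphi_i(1_{\c{M}_i}) = 1$. Hence $G_i$ is a subgroup of $\m{R}_+^{\times}$.

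Suppose now that $S_{ij}$ is nonempty. The identity $G_i S_{ij} = S_{ij}$ is just $S_{ii} S_{ij} = S_{ij}$ from Lemma \ref{lem:multi1}. I would then fix any $r_j \in S_{ij}$, so that $r_j^{-1} \in S_{ji}$. The inclusion $r_j G_i \subseteq S_{ij}$ follows from $r_j \cdot S_{ii} \subseteq S_{ij} S_{ii} = S_{ij}$ (with a symmetric use of Lemma \ref{lem:multi1}). Conversely, any $s \in S_{ij}$ satisfies $s r_j^{-1} \in S_{ij} S_{ji} = G_i$, hence $s \in G_i r_j = r_j G_i$ by commutativity of $\m{R}_+^{\times}$. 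This proves $S_{ij} = r_j G_i$. Finally, $G_j = G_i$ follows from $G_j = S_{jj} = S_{ji} S_{ij} = (r_j^{-1} G_i)(r_j G_i) = G_i$, where I used that $S_{ji} = S_{ij}^{-1} = (r_j G_i)^{-1} = r_j^{-1} G_i$ since $G_i$ is already known to be a group.

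There is no substantial obstacle here; every step is a direct algebraic consequence of Lemma \ref{lem:multi1} together with the commutativity of $\m{R}_+^{\times}$. The only points that merit care are unpacking the set-theoretic identification $G_i = S_{ii}$ from the definitions and verifying $1 \in G_i$ via the trivial self-similar triple, both of which are immediate.
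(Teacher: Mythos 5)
Your proof is correct and follows essentially the same route as the paper: everything is reduced to the set identities $S_{ij}^{-1}=S_{ji}$ and $S_{ij}S_{jk}=S_{ik}$ of Lemma \ref{lem:multi1} together with the commutativity of $\m{R}_{+}^{\times}$ and the observation $1\in G_{i}=S_{ii}$. The only cosmetic differences are that the paper deduces ``$G_{i}$ is a group'' from $F(\c{M}_{i})=G_{i}$ rather than from $S_{ii}S_{ii}=S_{ii}$ and $S_{ii}^{-1}=S_{ii}$, and phrases the single-coset conclusion via an orbit decomposition, whereas your direct computation $S_{ij}r_{j}^{-1}\subseteq S_{ij}S_{ji}=G_{i}$ is, if anything, more explicit.
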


\begin{proof}
Since $F(\c{M}_{i})=G_{i}$, then $G_{i}$ is a group.  
We suppose $S_{ij}$ is nonempty for some $i$, $j$.  
Since $F(\c{M})$ is a group, $G_{i}S_{ij}\subset S_{ij}$.  
Moreover $G_{i}S_{ij}\supset S_{ij}$ because $I_{n}\in F(\c{M})$ and $1\in G_{i}$.  
Since the group $G_{i}$ operates the set $S_{ij}$, there exist positive real numbers $r_{\alpha}$ such that $S_{ij}=\sqcup_{\alpha} r_{\alpha}G_{i}$ by orbit decomposition.  
By $G_{i}S_{ij}=S_{ij}$, $S^{-1}_{ij}S_{ij}=G_{i}$. Then there exists a real positive number $r$ such that  $S_{ij}=rG_{i}$.  
Moreover, $G_{i}=S^{-1}_{ij}S_{ij}=S_{ji}S^{-1}_{ji}=G_{j}$ by \ref{lem:multi1}.   
\end{proof}

By \ref{lem:multi0}, \ref{lem:multi1}, and \ref{lem:multi2}, the next proposition follows.  

\begin{pro}\label{pro:2dim}
Let $\c{M}_{1}$ and $\c{M}_{2}$ be ${\rm II}_{1}$-factors.
Put $\c{M}=\c{M}_{1}\oplus \c{M}_{2}$.    
If $S_{12}$ is empty, then for any $X$ in $F(\c{M})$ there exist $g_{i}\in F(\c{M}_{i})$ such that $X=\left[ 
 \begin{array}{cc}
 g_{1} & 0 \\
 0 & g_{2} \\
 \end{array} 
 \right]
$.  If $S_{12}$ is nonempty, then there exists a real positive number $r$ such that for any $X$ in $F(\c{M})$ there exist $g_{i}\in F(\c{M}_{1})$ such that $X=\left[ 
 \begin{array}{cc}
 g_{1} & 0 \\
 0 & g_{2} \\
 \end{array} 
 \right]$ or  $X=\left[ 
 \begin{array}{cc}
 0 & rg_{1} \\
 r^{-1}g_{2} & 0 \\
 \end{array} 
 \right]$.  
\end{pro}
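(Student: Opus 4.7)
The plan is to deduce the proposition directly from Lemmas~\ref{lem:multi0}, \ref{lem:multi1}, and \ref{lem:multi2} by a case analysis on the permutation that appears in the normal form of an element of $F(\c{M})$. Since $n=2$, the symmetric group has only two elements, so the case split is trivial.

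First I apply Lemma~\ref{lem:multi0} to an arbitrary $X \in F(\c{M})$: we may write $X = D\, U(\sigma)$ with $D$ diagonal and $\sigma \in S_2$. If $\sigma$ is the identity, then $X$ is diagonal, and the characterization of diagonal entries in Lemma~\ref{lem:multi0} together with the identification $G_i = F(\c{M}_i)$ (Lemma~\ref{lem:multi2}, applied to each factor) gives $X_{11} \in F(\c{M}_1)$ and $X_{22} \in F(\c{M}_2)$, which is the first alternative in the statement.

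If instead $\sigma = (1\,2)$, then $X$ has zero diagonal and off-diagonal entries $X_{12}, X_{21}$. By Lemma~\ref{lem:multi0}, $X_{12} \in S_{12}$ and $X_{21} \in S_{21}$, so the existence of such an $X$ forces $S_{12}$ to be nonempty. This immediately handles the case $S_{12} = \emptyset$: only the identity permutation can occur, giving the first alternative. When $S_{12}$ is nonempty, Lemma~\ref{lem:multi2} supplies a positive real $r$ with $S_{12} = r G_1$ and with $G_1 = G_2$; then Lemma~\ref{lem:multi1} gives $S_{21} = S_{12}^{-1} = r^{-1} G_1$. Hence $X_{12} = r g_1$ and $X_{21} = r^{-1} g_2$ for some $g_1, g_2 \in G_1 = F(\c{M}_1)$, matching the second alternative.

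There is essentially no obstacle: the proposition is a bookkeeping consequence of the three lemmas. The only point worth underlining is that the scalar $r$ is independent of $X$; this is exactly the content of the statement $S_{12} = r G_1$ in Lemma~\ref{lem:multi2}, which ensures that $S_{12}$ is a single $G_1$-coset rather than a union of cosets. Once this is noted, the two possible forms of $X$ are precisely those listed in the proposition.
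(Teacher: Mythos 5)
Your argument is correct and is exactly the route the paper intends: the paper states that Proposition~\ref{pro:2dim} ``follows'' from Lemmas~\ref{lem:multi0}, \ref{lem:multi1}, and \ref{lem:multi2} without writing out the case split, and your proposal simply fills in that bookkeeping (the $\sigma=\mathrm{id}$ versus $\sigma=(1\,2)$ dichotomy, the identification $G_i=F(\c{M}_i)$, and the single-coset statement $S_{12}=rG_1$ with $S_{21}=r^{-1}G_1$). Nothing further is needed.
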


In the case $\c{M}=\oplus^{n}_{i=1} \c{M}_{i}$, similar proposition shall be shown.  

\begin{rem}
This proposition \ref{pro:2dim} can be applied to the fundamental groups of $C^{*}$-algebras.  
Let $\c{A}$ be a unital $C^{*}$-algebras with finite dimensional trace space.  
Put $G_{i}=\set{\lambda \mid \lambda \varphi_{i}= ({\rm Tr}_{n} \otimes \varphi_{i}) \circ \Phi \ \Phi:{\rm self-similar\ map\ of}\  \c{A}}$\\
and put $S_{ij}=\set{\lambda \mid \lambda \varphi_{i}= ({\rm Tr}_{n} \otimes \varphi_{j}) \circ \Phi \ \Phi:{\rm self-similar\ map\ of}\ \c{A}}$.  
Then \ref{lem:multi0}, \ref{lem:multi1}, and \ref{lem:multi2} can be followed similarly.  
Hence following proposition will be showed.    
\end{rem}

\begin{pro}
Let $\c{A}$ be a unital $C^{*}$-algebra with two dimensional trace space.  
If $S_{12}$ is empty, then for any $X$ in $F(\c{A})$ there exist $g_{i}\in G_{i}$ such that $X=\left[ 
 \begin{array}{cc}
 g_{1} & 0 \\
 0 & g_{2} \\
 \end{array} 
 \right]
$.  If $S_{12}$ is nonempty, then there exists a real positive number $r$ such that for any $X$ in $F(\c{A})$ there exist $g_{i}\in G_{1}$ such that $X=\left[ 
 \begin{array}{cc}
 g_{1} & 0 \\
 0 & g_{2} \\
 \end{array} 
 \right]$ or  $X=\left[ 
 \begin{array}{cc}
 0 & rg_{1} \\
 r^{-1}g_{2} & 0 \\
 \end{array} 
 \right]$.  
\end{pro}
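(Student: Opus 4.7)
The plan is to run the argument of Proposition \ref{pro:2dim} verbatim, but with the $C^{*}$-algebra versions of $G_{i}$ and $S_{ij}$ supplied by the preceding remark. Let $\c{A}$ have extremal traces $\varphi_{1},\varphi_{2}$, and let $X\in F(\c{A})$. By the $C^{*}$-analog of Lemma \ref{lem:multi0}, $X$ arises from some self-similar triple $(k,p,\Phi)$ for $\c{A}$, and the matrix entries $X_{ij}$ with respect to the basis $\{\varphi_{1},\varphi_{2}\}$ are either zero or lie in $S_{ji}$. Because the trace $\varphi_{i}\circ\Phi$ on $\c{A}$ decomposes through the two extreme points, exactly one entry in each row is nonzero, so $X=DU(\sigma)$ for a permutation $\sigma\in S_{2}$ and a $2\times 2$ positive diagonal matrix $D$. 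This gives the monomial form of every element of $F(\c{A})$.

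First I would handle the case $S_{12}=\emptyset$: then no self-similar triple can intertwine $\varphi_{1}$ and $\varphi_{2}$, so $\sigma$ must be the identity for every $X$, and the diagonal entries $X_{11},X_{22}$ fall in $G_{1},G_{2}$ respectively by the $C^{*}$-analog of the second assertion of Lemma \ref{lem:multi0}. Conversely, given $g_{i}\in G_{i}$, one realizes $\mathrm{diag}(g_{1},g_{2})$ by taking a direct sum of the two self-similar maps witnessing $g_{1}$ and $g_{2}$, which is again a self-similar map of $\c{A}$.

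Next I would handle the case $S_{12}\neq\emptyset$. By the $C^{*}$-analog of Lemma \ref{lem:multi2}, $G_{1}=G_{2}$ and there is a positive real $r$ with $S_{12}=rG_{1}$; by the $C^{*}$-analog of Lemma \ref{lem:multi1}, $S_{21}=S_{12}^{-1}=r^{-1}G_{1}$. If $\sigma$ is the identity the argument above applies; if $\sigma$ is the transposition, then $X_{12}\in S_{21}$ forces $X_{12}=r g_{1}$ for some $g_{1}\in G_{1}$, and similarly $X_{21}=r^{-1}g_{2}$ for some $g_{2}\in G_{1}$, giving the stated anti-diagonal form. Conversely, every such matrix is realized by composing the self-similar map that witnesses a fixed element of $S_{12}$ with the maps witnessing $g_{1},g_{2}\in G_{1}$, using that $F(\c{A})$ is a group.

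The only real obstacle is verifying the three $C^{*}$-lemmas themselves, which the remark asserts but does not write out. The potential friction lies in checking that the composition of self-similar maps of $\c{A}$ acts on the trace simplex exactly as the matrix product predicts, and in pinning down the scalar $r$ as the common value obtained by any orbit representative in $S_{12}$; both points are handled by the identity $({\rm Tr}_{n}\otimes\varphi_{j})\circ\Phi=\lambda\varphi_{i}$, exactly as in the proof of Lemma \ref{lem:multi0}, together with the closure of $F(\c{A})$ under group operations to get $S_{12}^{-1}S_{12}=G_{1}$.
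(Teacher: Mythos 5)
Your core argument is the paper's own: the paper gives no separate proof of this proposition, asserting only (in the preceding remark) that the $C^{*}$-analogues of Lemmas \ref{lem:multi0}, \ref{lem:multi1} and \ref{lem:multi2} hold, after which the proposition follows exactly as Proposition \ref{pro:2dim} does. Your reduction of every $X\in F(\c{A})$ to the monomial form $DU(\sigma)$ and the case split on $\sigma$ is the intended route, and like the paper you defer the real work (the $C^{*}$-versions of the three lemmas, which rest on \cite{TK}) rather than supplying it. Two cautions. First, watch the indices: with the paper's convention $S_{ij}=\{X_{ij}\mid X\in F(\c{A}),\ X_{ij}\neq 0\}$, the entry $X_{12}$ lies in $S_{12}=rG_{1}$, not in $S_{21}=r^{-1}G_{1}$ as you wrote; taken literally, your argument would put $r^{-1}g$ in the $(1,2)$ slot and $rg$ in the $(2,1)$ slot, the opposite of what the proposition states. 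Second, and more substantively, your two ``conversely'' paragraphs prove something the proposition does not claim and that is not available in this generality: the statement is only an inclusion ($F(\c{A})$ is contained in the displayed set of matrices), unlike the von Neumann algebra version \ref{pro:2dimf}, which upgrades it to an equality. The construction you invoke to realize $\mathrm{diag}(g_{1},g_{2})$ --- taking ``a direct sum of the two self-similar maps'' --- presupposes that $\c{A}$ itself splits as a direct sum compatible with the two extremal traces, which a general unital $C^{*}$-algebra with two-dimensional trace space need not do (that splitting is exactly what Lemma \ref{lem:finite} provides in the von Neumann setting and what fails here). Deleting those realizability paragraphs, and fixing the index convention, leaves a proof that matches the paper's intent.
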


\begin{rem}
The proofs of the propositions \ref{lem:multi0}, \ref{lem:multi1}, and \ref{lem:multi2} are based on the internal tensor product and dual module of equivalence bimodules.    
Especially, the proposition \ref{lem:multi2} are based on the next proposition.  
\end{rem} 

\begin{pro}
Let $\c{A}$ and $\c{B}$ be $C^{*}$-algebras.   
For any $\c{A}$-$\c{A}$ equivalence bimodule $\c{E}$ and for any $\c{A}$-$\c{B}$ equivalence bimodule $\c{F}$, there exists an $\c{A}$-$\c{B}$ equivalence bimodule $\c{G}$ such that $\c{F}=\c{E}\otimes \c{G}$.  
\end{pro}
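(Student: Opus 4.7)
The natural candidate for $\c{G}$ is the internal tensor product $\c{E}^{*}\otimes_{\c{A}}\c{F}$, where $\c{E}^{*}$ denotes the dual module of $\c{E}$. The plan is to construct $\c{G}$ in this way and then verify, using only formal properties of equivalence bimodules, that $\c{E}\otimes_{\c{A}}\c{G}\cong\c{F}$ as $\c{A}$-$\c{B}$ equivalence bimodules.

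First I would recall (or briefly construct) the dual bimodule $\c{E}^{*}$: as a set it is $\c{E}$ with conjugate scalar multiplication, carrying an $\c{A}$-$\c{A}$ bimodule structure via $a\cdot\xi^{*}\cdot b=(b^{*}\xi a^{*})^{*}$, with inner products ${}_{\c{A}}\langle\xi^{*},\eta^{*}\rangle=\langle\xi,\eta\rangle_{\c{A}}$ and $\langle\xi^{*},\eta^{*}\rangle_{\c{A}}={}_{\c{A}}\langle\xi,\eta\rangle$. Standard checks show $\c{E}^{*}$ is again an $\c{A}$-$\c{A}$ equivalence bimodule. Since the internal tensor product of an $\c{A}$-$\c{A}$ equivalence bimodule and an $\c{A}$-$\c{B}$ equivalence bimodule is again an $\c{A}$-$\c{B}$ equivalence bimodule, $\c{G}:=\c{E}^{*}\otimes_{\c{A}}\c{F}$ has the required type.

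The key step is the canonical isomorphism $\c{E}\otimes_{\c{A}}\c{E}^{*}\cong\c{A}$ as $\c{A}$-$\c{A}$ equivalence bimodules. Concretely, the map $\xi\otimes\eta^{*}\mapsto{}_{\c{A}}\langle\xi,\eta\rangle$ is bimodule-linear and preserves both inner products (by the compatibility axiom ${}_{\c{A}}\langle x,y\rangle z=x\langle y,z\rangle_{\c{A}}$ used in the definition of equivalence bimodule), and its image is weakly dense in $\c{A}$ by fullness. Granting this standard identification together with associativity of the internal tensor product and the fact that $\c{A}\otimes_{\c{A}}\c{F}\cong\c{F}$ canonically, we obtain
\begin{equation*}
\c{E}\otimes_{\c{A}}\c{G}=\c{E}\otimes_{\c{A}}(\c{E}^{*}\otimes_{\c{A}}\c{F})\cong(\c{E}\otimes_{\c{A}}\c{E}^{*})\otimes_{\c{A}}\c{F}\cong\c{A}\otimes_{\c{A}}\c{F}\cong\c{F},
\end{equation*}
as $\c{A}$-$\c{B}$ equivalence bimodules.

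The only real obstacle is the construction of $\c{E}^{*}$ and the verification that $[\c{E}][\c{E}^{*}]=[\c{A}]$ in the Picard groupoid; once these are in place, the rest is formal. Alternatively, using Proposition~\ref{pro:respond} one can write $\c{E}\cong p\c{A}^{n}$ for a self-similar full projection $p\in M_{n}(\c{A})$, in which case the dual is realized concretely as $\c{A}^{n}p$, and the isomorphism $\c{E}\otimes\c{E}^{*}\cong\c{A}$ becomes the identity $p\cdot p=p$ combined with fullness of $p$; this gives a bimodule-picture proof that avoids any categorical machinery.
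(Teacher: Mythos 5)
Your proposal is correct and is essentially identical to the paper's proof, which simply sets $\c{G}=\c{E}^{*}\otimes \c{F}$ and concludes $\c{F}=\c{E}\otimes \c{G}$; you have merely spelled out the verification (the identification $\c{E}\otimes_{\c{A}}\c{E}^{*}\cong\c{A}$ and associativity of the internal tensor product) that the paper leaves implicit.
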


\begin{proof}
Put $\c{G}=\c{E}^{*}\otimes \c{F}$. Then $\c{F}=\c{E}\otimes \c{G}$.
\end{proof}

Moreover, in the case of the direct sum of ${\rm II}_{1}$-factors, the form of the fundamental group are completely determined.  First, we consider the case $n=2$.  

\begin{pro}\label{pro:2dimf}
Let $\c{M}_{1}$ and $\c{M}_{2}$ be ${\rm II}_{1}$-factors.
Put $\c{M}=\c{M}_{1}\oplus \c{M}_{2}$.    
Then one of the two patterns occurs;
\begin{enumerate}
 \item $F(\c{M})=\set{\left[ 
 \begin{array}{cc}
 g_{1} & 0 \\
 0 & g_{2} \\
 \end{array} 
 \right]\mid g_{1}\in F(M_{1})\ g_{2}\in F(M_{2})}$.  
 \item There exists a positive real number $r$ such that \\
$F(\c{M})=\set{\left[ 
 \begin{array}{cc}
 g_{1} & 0 \\
 0 & g_{2} \\
 \end{array} 
 \right],\ \left[ 
 \begin{array}{cc}
 0 & rg_{1} \\
 r^{-1}g_{2} & 0 \\
 \end{array} 
 \right]\mid g_{1}\in F(M_{1})\ g_{2}\in F(\c{M}_{2})}$.  
 In this case,  $F(M_{1})=F(\c{M}_{2})$.  
\end{enumerate} 
\end{pro}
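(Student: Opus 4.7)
My plan is to prove the two inclusions separately, leaning heavily on \ref{pro:2dim} and \ref{lem:multi2} for the easier direction. The containment of $F(\mathcal{M})$ in the right-hand side of both alternatives is essentially already given by \ref{pro:2dim}: every element of $F(\mathcal{M})$ is either block diagonal with blocks in $F(\mathcal{M}_1)$ and $F(\mathcal{M}_2)$, or anti-diagonal. In case $S_{12}$ is nonempty, \ref{lem:multi2} provides a single positive real $r$ with $S_{12}=rG_1=rF(\mathcal{M}_1)$ and $S_{21}=r^{-1}F(\mathcal{M}_2)$, and it also delivers the identity $F(\mathcal{M}_1)=G_1=G_2=F(\mathcal{M}_2)$. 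Thus the main task is the reverse inclusion: every matrix of the listed shape is actually realized by a self-similar triple for $\mathcal{M}$.

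For the diagonal matrices (which occur in both cases), given $g_i\in F(\mathcal{M}_i)$ I would pick self-similar triples $(k_i,q_i,\Psi_i)$ for $\mathcal{M}_i$ with $(\operatorname{Tr}_{k_i}\otimes\tau_i)(q_i)=g_i$ and then assemble a single self-similar triple for $\mathcal{M}$. The construction is to enlarge both $q_i$ to a common size $k=\max(k_1,k_2)$ by padding with zeros, set $p=q_1\oplus q_2\in M_k(\mathcal{M}_1)\oplus M_k(\mathcal{M}_2)=M_k(\mathcal{M})$, and define $\Psi=\Psi_1\oplus\Psi_2\colon\mathcal{M}\to pM_k(\mathcal{M})p$. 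A short trace computation using that $\operatorname{Tr}_k\otimes\varphi_i$ is supported on the $i$-th summand then gives $T_{(k,p,\Psi)}(\varphi_i)=g_i\varphi_i$, so the matrix representation $S_{\mathcal{M}}(T_{(k,p,\Psi)})$ is the desired $\mathrm{diag}(g_1,g_2)$.

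For the anti-diagonal matrices in case (2), the hypothesis that $S_{12}$ is nonempty furnishes some $X_0\in F(\mathcal{M})$ with nonzero $(1,2)$-entry, and by \ref{pro:2dim} this $X_0$ takes the form
\[
X_0=\begin{pmatrix} 0 & rh_1 \\ r^{-1}h_2 & 0 \end{pmatrix}
\]
for some $h_1,h_2\in F(\mathcal{M}_1)=F(\mathcal{M}_2)$. Given any prescribed $g_1,g_2\in F(\mathcal{M}_1)$, I would then left-multiply $X_0$ by the diagonal element $\mathrm{diag}(g_1h_1^{-1},\,g_2h_2^{-1})$, which belongs to $F(\mathcal{M})$ by the previous step. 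Because $F(\mathcal{M})$ is a group and the $g_i,h_i$ are positive real scalars that commute past each other, the product equals
\[
\begin{pmatrix} 0 & rg_1 \\ r^{-1}g_2 & 0 \end{pmatrix},
\]
completing the realization of every element on the list.

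The only mildly delicate step is the direct-sum assembly of the two self-similar triples in the diagonal case: the matrix sizes $k_1,k_2$ may differ, and I need to verify that after zero-padding $p=q_1\oplus q_2$ really is a full projection of $M_k(\mathcal{M})$ and that $\Psi_1\oplus\Psi_2$ really is an isomorphism onto $pM_k(\mathcal{M})p$. Once these routine checks are in place, everything else reduces to additivity of traces across the direct-sum decomposition and the already-established structural results \ref{pro:2dim} and \ref{lem:multi2}, so no serious obstacle is expected.
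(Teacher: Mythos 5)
Your proposal is correct and follows essentially the same route as the paper: the forward inclusion via \ref{pro:2dim}, and the reverse inclusion by assembling a self-similar projection of $M_k(\mathcal{M})$ as a direct sum of self-similar projections of the summands (the paper uses $p\oplus e_{11}$ to realize $\mathrm{diag}(g_1,1)$ and then multiplies, while you pad to a common size and realize $\mathrm{diag}(g_1,g_2)$ in one step), with the anti-diagonal elements obtained by multiplying one such element by diagonal ones inside the group $F(\mathcal{M})$.
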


\begin{proof}
By \ref{pro:2dim}, $F(\c{M})\subset \set{\left[ 
 \begin{array}{cc}
 g_{1} & 0 \\
 0 & g_{2} \\
 \end{array} 
 \right]\mid g_{1}\in F(M_{1})\ g_{2}\in F(M_{2})}$ or\\
 $F(\c{M})\subset \set{\left[ 
 \begin{array}{cc}
 g_{1} & 0 \\
 0 & g_{2} \\
 \end{array} 
 \right],\ \left[ 
 \begin{array}{cc}
 0 & rg_{1} \\
 r^{-1}g_{2} & 0 \\
 \end{array} 
 \right]\mid g_{1}\in F(M_{1})\ g_{2}\in F(\c{M}_{2})}$ for some $r$.  
To prove converse inclusion, it is sufficient to show that for any $g_{1}\in F(\c{M}_{1})$ $\left[ 
 \begin{array}{cc}
 g_{1} & 0 \\
 0 & 1 \\
 \end{array} 
 \right]\in F(\c{M})$.  
Let $g_{1}$ be an element of $ F(\c{M}_{1})$.  
Then there exists a self-similar projection $p$ of $M_{n}(\c{M}_{1})$ such that ${\rm Tr}_{n}\otimes \varphi_{1}(p)=g_{1}$.  
Therefor $p\oplus e_{11}$ is a self-similar projection of $M_{n}(\c{M})$ where $e_{11}$ is a projection of $M_{n}(\c{M}_{2})$.  
Hence $\left[ 
 \begin{array}{cc}
 g_{1} & 0 \\
 0 & 1 \\
 \end{array} 
 \right]\in F(\c{M})$.  
\end{proof}

Similarly, we can consider the case $n=3$.   

\begin{pro}
Let $\c{M}_{1}$, $\c{M}_{2}$ and $\c{M}_{3}$ be ${\rm II}_{1}$-factors.
Put $\c{M}=\c{M}_{1}\oplus \c{M}_{2}\oplus \c{M}_{3}$.  
Then, by permutating $\c{M}_{i}$, one of the three patterns occurs;
\begin{enumerate}
 \item $F(\c{M})=\set{\left[ 
 \begin{array}{ccc}
 g_{1} & 0 & 0\\
 0 & g_{2} & 0\\
 0 & 0 & g_{3}\\
 \end{array} 
 \right]\mid g_{1}\in F(M_{1}),\ g_{2}\in F(M_{2}),\ g_{3}\in F(M_{3})}$.  
 \item There exists a positive real number $r$ such that 
$F(\c{M})$ is consisted of the all matrices which have following two forms;
\begin{itemize}
 \item $\left[\begin{array}{ccc}
 g_{1} & 0 & 0\\
 0 & g_{2} & 0\\
 0 & 0 & g_{3}\\
 \end{array} 
 \right]$\ $g_{1}\in F(\c{M}_{1}),\ g_{2}\in F(\c{M}_{2}),\ g_{3}\in F(\c{M}_{3})$
 \item $\left[ 
 \begin{array}{ccc}
 g_{1} & 0 & 0  \\
 0 & 0 & rg_{2} \\
 0 & r^{-1}g_{3} & 0 \\
 \end{array} 
 \right]$\ $g_{1}\in F(\c{M}_{1}),\ g_{2}\in F(\c{M}_{2}),\ g_{3}\in F(\c{M}_{3})$
 \end{itemize} 
 In this case,  $F(\c{M}_{2})=F(\c{M}_{3})$.
 \item There exists positive real numbers $r_{1},r_{2}$ such that 
 $F(\c{M})$ is consisted of the all matrices which have following six forms;
 \begin{itemize}
 \item $\left[ 
 \begin{array}{ccc}
 g_{1} & 0 & 0\\
 0 & g_{2} & 0\\
 0 & 0 & g_{3}\\
 \end{array} 
 \right],\ \left[ 
 \begin{array}{ccc}
 0 & r_{1}g_{1} & 0  \\
 r^{-1}_{1}g_{2} & 0 & 0 \\
 0 & 0 & g_{3} \\
 \end{array} 
 \right],\ \left[ 
 \begin{array}{ccc}
 g_{1} & 0 & 0  \\
 0 & 0 & r_{2}g_{2} \\
 0 & r^{-1}_{2}g_{3} & 0 \\
 \end{array} 
 \right]$
 \item $\left[ 
 \begin{array}{ccc}
 0 & 0 & r_{1}r_{2}g_{1}  \\
 0 & g_{2} & 0 \\
 r^{-1}_{1}r^{-1}_{2}g_{3}& 0 & 0  \\
 \end{array} 
 \right],\ \left[ 
 \begin{array}{ccc}
 0 & 0 & r_{1}r_{2}g_{1}  \\
 r^{-1}_{1}g_{2} & 0 & 0  \\
 0 & r^{-1}_{2}g_{3} & 0  \\
 \end{array} 
 \right]$
 \item $\left[ 
 \begin{array}{ccc}
 0 & r_{1}g_{1} & 0 \\
 0 & 0 & r_{2}g_{2} \\
 r^{-1}_{1}r^{-1}_{2}g_{3}& 0 & 0  \\
 \end{array} 
 \right]$\\
 $g_{1}\in F(\c{M}_{1}),\ g_{2}\in F(\c{M}_{2}),\ g_{3}\in F(\c{M}_{3})$
 \end{itemize} 
 In this case,  $F(\c{M}_{1})=F(\c{M}_{2})=F(\c{M}_{3})$.  
 \end{enumerate} 
\end{pro}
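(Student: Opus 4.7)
The proof strategy parallels that of Proposition \ref{pro:2dimf}, applying the three preceding lemmas (\ref{lem:multi0}, \ref{lem:multi1}, \ref{lem:multi2}) in sequence. First, by \ref{lem:multi0}, every $X\in F(\c{M})$ decomposes as $X=DU(\sigma)$ for some positive diagonal $D$ and some $\sigma\in S_{3}$, with the nonzero diagonal entries of $D$ lying in the sets $S_{i,\sigma(i)}$. Consequently, the permutations $\sigma$ which actually appear in $F(\c{M})$ are exactly those for which all three sets $S_{1,\sigma(1)}$, $S_{2,\sigma(2)}$, $S_{3,\sigma(3)}$ are nonempty.

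The case split is governed by the relation on $\{1,2,3\}$ defined by $i\sim j$ iff $S_{ij}\neq \emptyset$. By \ref{lem:multi1} this is an equivalence relation: transitivity comes from $S_{ij}S_{jk}=S_{ik}$, symmetry from $S_{ij}^{-1}=S_{ji}$, and reflexivity from $I_{n}\in F(\c{M})$ together with $G_{i}\neq \emptyset$. Up to relabeling the summands $\c{M}_{i}$, the equivalence classes form one of the three partitions $\{1\}|\{2\}|\{3\}$, $\{1\}|\{2,3\}$, $\{1,2,3\}$, which correspond to the three cases of the statement.

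In each case I determine which $\sigma\in S_{3}$ preserve the partition and, via \ref{lem:multi2}, write $S_{ij}=r_{ij}G_{i}=r_{ij}F(\c{M}_{i})$ whenever $S_{ij}\neq \emptyset$; this also yields $F(\c{M}_{i})=F(\c{M}_{j})$ for any $i\sim j$, which accounts for the coincidences $F(\c{M}_{2})=F(\c{M}_{3})$ and $F(\c{M}_{1})=F(\c{M}_{2})=F(\c{M}_{3})$ in Cases 2 and 3. The numbers $r_{ij}$ are constrained by $r_{ji}=r_{ij}^{-1}$ and $r_{ik}=r_{ij}r_{jk}$ (both from \ref{lem:multi1}), so Case 2 is controlled by a single parameter $r$, and Case 3 is parametrized by $r_{1}:=r_{12}$ and $r_{2}:=r_{23}$ with the forced identity $r_{13}=r_{1}r_{2}$. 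Substituting into $X=DU(\sigma)$ and ranging over all admissible $\sigma$ reproduces precisely the matrix shapes listed.

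For the reverse inclusion, one must show every listed shape is actually in $F(\c{M})$. The diagonal pieces are handled exactly as in the last paragraph of the proof of \ref{pro:2dimf}: given $g_{i}\in F(\c{M}_{i})$, take a self-similar projection $p$ of $M_{n}(\c{M}_{i})$ with $\r{Tr}_{n}\otimes\varphi_{i}(p)=g_{i}$ and complete with rank-one projections in the other summands. For the off-diagonal pieces in Cases 2 and 3 one fixes, for each Morita pair $i\sim j$, a projection $p_{ij}\in M_{n}(\c{M}_{j})$ with $\c{M}_{i}\cong p_{ij}M_{n}(\c{M}_{j})p_{ij}$ witnessing $r_{ij}\in S_{ij}$, and assembles these together with self-similar projections of the individual factors as a single full projection in $M_{N}(\c{M})$ whose block pattern realises the desired permutation. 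The only real work is bookkeeping on block sizes and placements; no new conceptual input beyond the three lemmas is required, and the main obstacle is simply organising the six-permutation case analysis coherently.
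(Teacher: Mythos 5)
Your proof is correct and follows essentially the same route as the paper: a case split on which of the sets $S_{ij}$ are nonempty, the three lemmas \ref{lem:multi0}--\ref{lem:multi2} for the upper bound, and explicit projections (as in Proposition \ref{pro:2dimf}) for the lower bound. If anything you are slightly more careful than the paper, whose case analysis inspects only $S_{12}$ and $S_{13}$ (and so literally misassigns the sub-case $S_{12}=S_{13}=\emptyset$, $S_{23}\neq\emptyset$ to pattern 1) and which verifies only the diagonal generators in the reverse inclusion; your equivalence-class partition and your explicit off-diagonal projection construction close both of these small gaps.
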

\begin{proof}
We consider $S_{12}$ and $S_{13}$.  
If both of $S_{12}$ and $S_{13}$ are empty, then $F(\c{M})$ is a subgroup of the group of the pattern 1.  
If one of the sets $S_{12}$ and $S_{13}$ is empty and the other is nonempty, by permutating $\c{M}_{i}$, $F(\c{M})$ is a subgroup of the group of the pattern 2.  
If both of $S_{12}$ and $S_{13}$ are nonempty, $F(\c{M})$ is a subgroup of the group of the pattern 3.  
As in the same with \ref{pro:2dimf}, $\left[ 
 \begin{array}{ccc}
 g_{1} & 0 & 0 \\
 0 & 1 & 0 \\
 0 & 0 & 1  \\
 \end{array} 
 \right]$, $\left[ 
 \begin{array}{ccc}
 1 & 0 & 0 \\
 0 & g_{2} & 0 \\
 0 & 0 & 1  \\
 \end{array} 
 \right]$, and $\left[ 
 \begin{array}{ccc}
 1 & 0 & 0 \\
 0 & 1 & 0 \\
 0 & 0 & g_{3}  \\
 \end{array} 
 \right]$ are in $F(\c{M})$ for any $g_{i}\in F(\c{M}_{i})$.  
 Then $F(\c{M})$ agrees with one of the patterns.  
\end{proof}

We consider the case $n\geq 3$.  
Let $\c{M}$ be a direct sum of $n\ {\rm II}_{1}$-factors $\c{M}_{i}$.  
Put $\c{M}=\oplus^{n}_{i=1}\c{M}_{i}$.  
We define an equivalence relation $\sim_{\c{M}}$ on $\set{1,\cdots,n}$ by $\c{M}_{i}\cong p_{j}M_{n}(M_{j})p_{j}$ for some projection $p_{j}$ of $p_{j}M_{n}(M_{j})p_{j}$.  
Let $\set{i_{k}}^{l}_{k=1}$ be a complete system of representatives of this relation.  
We consider the tuple $(\sharp ([i_{k}]_{\c{M}}))^{l}_{k=1}$, where $[j]_{\c{M}}$ is an equivalence class of $j$ by $\sim_{\c{M}}$.  
We call this tuple $size\ tuple$ of $\c{M}$.  
This tuple is invariant for the complete system of representatives up to permutation.  
\begin{pro}
Let $\c{M}$ and $\c{N}$ be direct sums of $n\ {\rm II}_{1}$-factors $\c{M}_{i}$ and $\c{N}_{j}$ respectively.  
Put $\c{M}=\oplus^{n}_{i=1}\c{M}_{i}$ and $\c{N}=\oplus^{n}_{j=1}\c{N}_{j}$.  
We denote by $(\sharp ([i_{k}]_{\c{M}}))^{l_{\c{M}}}_{k=1}$ and $(\sharp ([j_{k}]_{\c{N}}))^{l_{\c{N}}}_{k=1}$ the size tuples of $\c{M}$ and $\c{N}$ respectively.  
If $\c{M}$ and $\c{N}$ are isomorphic (Morita equivalent), then $l_{\c{M}}=l_{\c{N}}$ and then there exists $\sigma\in S_{l_{\c{M}}}$ such that $(\sharp ([\sigma(i_{k})]_{\c{M}}))^{l_{\c{M}}}_{k=1}=(\sharp ([j_{k}]_{\c{N}}))^{l_{\c{N}}}_{k=1}$
\end{pro}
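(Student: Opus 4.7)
The plan is to reduce the relation $\sim_{\c{M}}$ to Morita equivalence of the individual factor summands, and then show that both isomorphism and Morita equivalence between $\c{M}$ and $\c{N}$ produce a bijection on the indexing set $\{1,\ldots,n\}$ that preserves this relation. Once that bijection is in hand, it descends to a bijection on equivalence classes which matches the size tuples.

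First I would check that for $\mathrm{II}_{1}$-factors the relation $\sim_{\c{M}}$ is just Morita equivalence of the corresponding summands: the condition $\c{M}_{i}\cong p_{j}M_{n}(\c{M}_{j})p_{j}$ for a projection $p_{j}$ says exactly that $\c{M}_{i}$ is isomorphic to an amplification $\c{M}_{j}^{t}$ with $t=\mathrm{Tr}_n\otimes\varphi_j(p_j)$, which for $\mathrm{II}_{1}$-factors is equivalent to $\c{M}_{i}$ and $\c{M}_{j}$ being Morita equivalent as von Neumann algebras. This reformulation is what makes the relation behave well under the two kinds of equivalence.

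For the isomorphism case, an isomorphism $\alpha:\c{M}\to \c{N}$ restricts to a $*$-isomorphism $\c{Z}(\c{M})\to \c{Z}(\c{N})$; by Lemma \ref{lem:finite} both centers are isomorphic to $\m{C}^{n}$, so $\alpha$ permutes the minimal central projections via some $\sigma\in S_n$, restricting to isomorphisms $\c{M}_{i}\to \c{N}_{\sigma(i)}$. Since isomorphic factors are trivially Morita equivalent, one has $i\sim_{\c{M}}j$ iff $\sigma(i)\sim_{\c{N}}\sigma(j)$, and $\sigma$ therefore descends to a bijection between the equivalence classes that preserves their cardinalities.

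For the Morita equivalence case, let $\c{F}$ be an $\c{M}$-$\c{N}$ equivalence bimodule. The standard construction (central elements act on $\c{F}$ from the left and right and the two actions agree up to a unique $*$-isomorphism $\c{Z}(\c{M})\to \c{Z}(\c{N})$) yields a permutation $\sigma$ of $\{1,\ldots,n\}$ matching central projections $z_i\in \c{Z}(\c{M})$ with $w_{\sigma(i)}\in \c{Z}(\c{N})$. Then $z_i\c{F}=\c{F}w_{\sigma(i)}$ is an $\c{M}_i$-$\c{N}_{\sigma(i)}$ equivalence bimodule, so the corresponding factors are Morita equivalent. Composing with the isomorphisms in the definition of $\sim$, one checks that $i\sim_{\c{M}}j$ forces $\sigma(i)\sim_{\c{N}}\sigma(j)$, and the reverse implication follows by using the dual bimodule $\c{F}^{*}$. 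Hence $\sigma$ again descends to a size-preserving bijection between equivalence classes, giving $l_{\c{M}}=l_{\c{N}}$ and the required permutation of size tuples. The main technical point to verify carefully is the well-definedness of the center isomorphism induced by $\c{F}$ and the fact that the corner bimodules $z_i\c{F}$ are genuine equivalence bimodules between the summand factors; everything else is transitivity of Morita equivalence.
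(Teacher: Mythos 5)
The paper states this proposition without any proof, so there is nothing to compare against line by line; your argument is correct and supplies exactly the missing content. Identifying $\sim_{\c{M}}$ with Morita equivalence of the summands is the right move: $\c{M}_{i}\cong p_{j}M_{n}(\c{M}_{j})p_{j}$ is precisely the amplification relation, which for ${\rm II}_{1}$-factors coincides with Morita equivalence in the paper's sense (Lemma \ref{lem:full} guarantees every equivalence bimodule here has a finite basis, and Proposition \ref{pro:respond} converts it into a corner of a matrix amplification), and this reformulation also settles the point, left implicit in the paper, that $\sim_{\c{M}}$ is indeed an equivalence relation (symmetry via the reciprocal amplification, transitivity via $(\c{M}^{s})^{t}=\c{M}^{st}$). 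The isomorphism case via the induced permutation of minimal central projections is standard, and in the Morita case the center isomorphism induced by an equivalence bimodule $\c{F}$ together with the corners $z_{i}\c{F}=\c{F}w_{\sigma(i)}$ is the natural mechanism; the two technical points you flag (well-definedness of the center isomorphism, and that the corners are genuine $\c{M}_{i}$-$\c{N}_{\sigma(i)}$ equivalence bimodules, which follows since $z_{i}\,{}_{\c{M}}\braket{x,y}$ spans a weakly dense subset of $z_{i}\c{M}=\c{M}_{i}$ and similarly on the right) are both routine and correctly identified as the only things needing verification. Your proof is complete modulo those standard checks.
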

Put $\c{M}=\oplus^{n}_{i=1}\c{M}_{i}$, where $\c{M}_{i}$ is a ${\rm II}_{1}$-factor.  
By this classification, we need only to consider the case: $\c{M}=\oplus^{n}_{i=1}\oplus^{m}_{j=1}\c{M}_{i,j}$, 
where $\c{M}_{i,j_{1}}$ and $\c{M}_{i,j_{2}}$ is stably isomorphic for any $j_{1},j_{2}$, 
and where $\c{M}_{i_{1},j_{1}}$ and $\c{M}_{i_{2},j_{2}}$ is not stably isomorphic if $i_{1}\neq i_{2}$.  
From now, we consider the form of fundamental group of $\c{M}=\oplus^{n}_{i=1}\c{M}_{i}$, where $\c{M}_{i_{1}}$ and $\c{M}_{i_{2}}$ is stably isomorphic for any $i_{1},i_{2}$.  
Put $DU_{n}(\m{R}_{+})=\set{DU_{\sigma}: D\ {\rm is\ a\ positive\ diagonal\ matrix\ in}\ GL_{n}(\m{R})\ \sigma\in S_{n}}$.  
Let $G$ be a subgroup of $\m{R}^{\times}_{+}$, $n$ be in $\m{N}$ and $r_{1},\cdots,r_{n}$ be positive numbers.  
We denote by $\c{G}(G,\set{r_{i}}^{n}_{i=1})$ the group in $DU_{n+1}(\m{R}_{+})$ which is generated by diagonal matrices whose diagonal elements are in $G$ and by unitaries $U^{r_{j}}_{(1,j+1)}$ which satisfies  $(U^{r_{j}}_{(1,j+1)})_{1,j+1}=(U^{r_{j}}_{(1,j+1)})^{-1}_{j+1,1}=r_{j}$, $(U^{r_{j}}_{(1,j+1)})_{l,l}=1$ if $l\neq 1,j+1$, and the other elements of $U^{r_{j}}_{(1,j+1)}$ are $0$.  
In fact, $\c{G}(G,\set{r_{i}}^{n}_{i=1})$ is isomorphic to $G^{n} \rtimes S_{n}$ as group.  

\begin{pro}
Let $\c{M}$ be a direct sum of $n\ {\rm II}_{1}$-factors $\c{M}_{i}$, where $\c{M}_{i_{1}}$ and $\c{M}_{i_{2}}$ are stably isomorphic for any $i_{1},i_{2}$.  
Put $G=F(\c{M}_{1})$.  
Then there exists $r_{j}>0$ such that $F(\c{M})=\c{G}(G,\set{r_{i}}^{n}_{i=1})$.  
\end{pro}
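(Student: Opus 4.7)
The plan is to combine the structural lemmas \ref{lem:multi0}, \ref{lem:multi1}, and \ref{lem:multi2} to force $F(\c{M})$ into the shape described by $\c{G}(G,\set{r_{i}}^{n}_{i=1})$, and then realize each generator of the latter group by an explicit self-similar full projection in some $M_{k}(\c{M})$.

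First I would observe that since $\c{M}_{i}$ and $\c{M}_{j}$ are stably isomorphic for every pair, every $S_{ij}$ is nonempty. Lemma \ref{lem:multi2} then forces $G_{i}=G$ for all $i$ and $S_{ij}=r_{ij}G$ for some positive real $r_{ij}$, while Lemma \ref{lem:multi1} gives the cocycle relation $r_{ij}r_{jk}G=r_{ik}G$. Choosing index $1$ as a basepoint and setting $r_{j}:=r_{1,j+1}$, this relation determines every $r_{ij}$ modulo $G$, which is precisely the data that assembles the group $\c{G}(G,\set{r_{i}}^{n}_{i=1})$.

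For the inclusion $F(\c{M})\subseteq\c{G}(G,\set{r_{i}}^{n}_{i=1})$, I would apply Lemma \ref{lem:multi0}: every $X\in F(\c{M})$ can be written $X=DU_{\sigma}$ with $D_{ii}=X_{i\sigma(i)}\in S_{i,\sigma(i)}=r_{i,\sigma(i)}G$. Writing $\sigma$ as a product of transpositions of the form $(1,k)$ and using the cocycle relation above to express each $r_{i,\sigma(i)}$ in terms of the base values $r_{j}$ and an element of $G$, the product $DU_{\sigma}$ rewrites as a product of a diagonal matrix in $G^{n}$ and the transposition generators $U^{r_{j}}_{(1,j+1)}$.

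For the reverse inclusion I would realize each generator explicitly. For diagonal generators, the argument of Proposition \ref{pro:2dimf} adapts directly: given $g_{i}\in F(\c{M}_{i})=G$, pick a self-similar full projection $p\in M_{k}(\c{M}_{i})$ with ${\rm Tr}_{k}\otimes\varphi_{i}(p)=g_{i}$ and fill the remaining summands of $\c{M}$ with $e_{11}$; the result is a self-similar full projection of $M_{k}(\c{M})$ whose image in $F(\c{M})$ is the desired diagonal matrix. For the transposition generators $U^{r_{j}}_{(1,j+1)}$, the stable isomorphism $\c{M}_{1}\sim \c{M}_{j+1}$ provides a projection $q\in M_{k}(\c{M}_{j+1})$ with $\c{M}_{1}\cong qM_{k}(\c{M}_{j+1})q$ and ${\rm Tr}_{k}\otimes\varphi_{j+1}(q)=r_{j}$; together with a matching projection in $M_{k}(\c{M}_{1})$ realizing the inverse (provided by the dual bimodule, cf.\ the argument of \ref{pro:morita}) and $e_{11}$ in the remaining slots, this assembles into a self-similar full projection of $M_{k}(\c{M})$ whose associated element of $F(\c{M})$ is exactly $U^{r_{j}}_{(1,j+1)}$.

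The main obstacle is the bookkeeping for a coherent single choice of representatives $r_{j}\in S_{1,j+1}$: the $r_{j}$ appearing as parameters in the containment $F(\c{M})\subseteq\c{G}(G,\set{r_{i}}^{n}_{i=1})$ must coincide with those realized by self-similar full projections witnessing the reverse containment, and a general product of transpositions $(1,k)(1,k')$ picks up a factor $r_{k}r_{k'}^{-1}$ rather than a single $r_{j}$, which is exactly why $\c{G}(G,\set{r_{i}}^{n}_{i=1})$ is defined through these particular generators. The compatibility relations guaranteed by Lemmas \ref{lem:multi1} and \ref{lem:multi2} make this coherent choice possible.
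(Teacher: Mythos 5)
Your proposal is correct and follows essentially the same route as the paper: stable isomorphism of all summands forces every $S_{ij}$ to be nonempty with $G_i=G$ and $S_{1j}=\lambda_j G$, and the generation of $S_n$ by the transpositions $(1\ j)$ then yields the generators $U^{\lambda_j}_{(1,j+1)}$ together with diagonal matrices over $G$. The paper's proof is far terser and leaves the two inclusions (in particular the explicit realization of generators by self-similar projections, as in Proposition \ref{pro:2dimf}) implicit, whereas you spell them out; this is elaboration rather than a different method.
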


\begin{proof}
Since all the components of the sum are stably isomorphic, $G=G_{i}$ and $S_{ij}\neq 0$ for any $i,j$ by the definition of $G_{i},S_{ij}$ in \ref{lem:multi0}.  
Then, for $j\geq 1$, there exists $\lambda_{j}>0$ such that $S_{1j}=\lambda_{j}G$.  
Since $S_{n}$ is generated by $\set{(1\ j):j=2,\cdots,n}$, $F(\c{M})$ is generated by $U^{\lambda_{j}}_{(1,j+1)}$ and by diagonal matrices whose diagonal entries are in $G$.  
\end{proof}

\begin{pro}
Let $\c{M}$ be a direct sum of ${\rm II}_{1}$-factors.  
By permutating some components of the direct sum, there exist some subgroups $G_{j}$ and $H_{k}$ in $\m{R}^{\times}_{+}$  and some positive real number $r_{i,j}$ such that\\
$F(\c{M})=
\left[ 
 \begin{array}{ccccccc}
 G_{1} & 0      & \cdots & \cdots                                   & \cdots & \cdots & 0      \\
 \vdots& \ddots & \ddots &                                          &        &         & \vdots \\ 
 0     & \cdots & G_{l}  & 0                                        & \cdots & \cdots & 0      \\
 O     & \cdots & O      & \c{G}(H_{1},\set{r_{i,1}}^{n_{1}}_{i=1}) & O      & \cdots & O      \\
 \vdots& \ddots &        & \ddots                                   & \ddots & \ddots & \vdots \\
 \vdots&        & \ddots &                                          & \ddots & \ddots & O      \\
 O     & \cdots & \cdots & O                                        & \cdots & O      &\c{G}(H_{m},\set{r_{i,m}}^{n_{m}}_{i=1}) \\
\end{array} 
\right]$.  \\
However, the part of $G_{j}$ or that of $\c{G}(H_{1},\set{r_{i,1}}^{n_{1}}_{i=1})$ might be empty.  
\end{pro}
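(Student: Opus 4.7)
The plan is to combine the equivalence relation $\sim_{\c{M}}$ introduced just above with the preceding proposition, which describes $F$ of a direct sum of mutually stably isomorphic ${\rm II}_1$-factors. First I reorder the summands so that each $\sim_{\c{M}}$-class of indices occurs as a consecutive block; the resulting data yield $l$ singleton classes (giving the $G_1,\dots,G_l$ blocks) and $m$ classes of sizes $n_1,\dots,n_m>1$ (giving the $\c{G}(H_k,\{r_{i,k}\})$ blocks). Write $\c{M}=\bigoplus_{k=1}^{l+m}\c{N}_k$ accordingly.

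The key step is to establish block-diagonality of $F(\c{M})$ with respect to this decomposition. By Lemma \ref{lem:multi0}, every $X\in F(\c{M})$ equals $DU_\sigma$ with each nonzero entry $X_{i,\sigma(i)}$ lying in $S_{i,\sigma(i)}$. But $S_{ij}\neq\emptyset$ means that $\c{M}_i$ is isomorphic to a corner of some matrix algebra over $\c{M}_j$, which is precisely $i\sim_{\c{M}}j$. Thus $\sigma$ must preserve each equivalence class, so $X$ is block diagonal with blocks indexed by the reordered classes.

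Next, I identify each block. For a singleton class $\{j\}$ the sole diagonal entry lies in $G_j=F(\c{M}_j)$ by Lemma \ref{lem:multi0}, and conversely every $g\in F(\c{M}_j)$ extends to an element of $F(\c{M})$ by the padding trick used in Proposition \ref{pro:2dimf}: take a self-similar full projection $p\in M_n(\c{M}_j)$ with $({\rm Tr}_n\otimes\varphi_j)(p)=g$ and adjoin the rank-one projection $e_{11}$ in $M_n$ of each other summand, using $e_{11}M_n(\c{N})e_{11}\cong\c{N}$. For a class of size $n_k>1$, decomposing any self-similar full projection $p=\oplus_j p_j$ of $M_n(\c{M})$ and restricting to the indices of that class produces a self-similar full projection of $M_n(\c{N}_k)$, while the same padding argument gives the reverse inclusion. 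Therefore the $k$-th block of $F(\c{M})$ coincides with $F(\c{N}_k)$, which by the previous proposition equals $\c{G}(H_k,\{r_{i,k}\}_{i=1}^{n_k})$ for $H_k=F(\c{M}_{k,1})$.

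The main obstacle is purely bookkeeping: verifying that assembling a self-similar projection on one block together with $e_{11}$'s on the others yields a self-similar full projection of $M_n(\c{M})$, and that the resulting matrix representation in the basis $\{\varphi_i\}$ is exactly the block-diagonal form prescribed. Both checks reduce to direct imitations of the proofs of Proposition \ref{pro:2dimf} and the preceding proposition, so no essentially new idea is required beyond the two ingredients already assembled.
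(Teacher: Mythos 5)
Your argument is correct and follows exactly the route the paper intends (the paper actually states this proposition without proof): reorder the summands by the stable-isomorphism classes, get block-diagonality from Lemma \ref{lem:multi0} together with the fact that $S_{ij}=\emptyset$ across distinct classes, and identify each block with $F(\c{N}_k)$ via the preceding proposition and the $e_{11}$-padding trick of Proposition \ref{pro:2dimf}. Nothing essential is missing.
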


\begin{thm}
Let $k$ be a natural number and let $G_{1},\cdots,G_{l}$ and $H_{1},\cdots H_{m}$ be countable subgroups of $\m{R}^{\times}_{+}$.  
For any $n_{1},\cdots,n_{m}$ and for any $r_{i,j}>0$ there exists a direct sum of ${\rm II}_{1}$-factors $\c{M}$ such that \\
$F(\c{M})=
\left[ 
 \begin{array}{ccccccc}
 G_{1} & 0      & \cdots & \cdots                                   & \cdots & \cdots & 0      \\
 \vdots& \ddots & \ddots &                                          &        &         & \vdots \\ 
 0     & \cdots & G_{l}  & 0                                        & \cdots & \cdots & 0      \\
 O     & \cdots & O      & \c{G}(H_{1},\set{r_{i,1}}^{n_{1}}_{i=1}) & O      & \cdots & O      \\
 \vdots& \ddots &        & \ddots                                   & \ddots & \ddots & \vdots \\
 \vdots&        & \ddots &                                          & \ddots & \ddots & O      \\
 O     & \cdots & \cdots & O                                        & \cdots & O      &\c{G}(H_{m},\set{r_{i,m}}^{n_{m}}_{i=1}) \\
\end{array} 
\right]$.  
\end{thm}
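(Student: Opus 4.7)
The plan is to realize the prescribed matrix group as the fundamental group of an explicit direct sum whose summands correspond to the diagonal ``blocks'' of the target. First I would invoke Popa's construction (cited in the introduction as \cite{Po}): for each countable subgroup of $\m{R}_{+}^{\times}$, there is a ${\rm II}_{1}$-factor with that fundamental group, and these factors can be arranged to be pairwise non-stably-isomorphic. Using this, select ${\rm II}_{1}$-factors $N_{1},\ldots,N_{l}$ with $F(N_{j})=G_{j}$ and $M_{1},\ldots,M_{m}$ with $F(M_{k})=H_{k}$, so that all the factors $N_{1},\ldots,N_{l},M_{1},\ldots,M_{m}$ are pairwise non-stably-isomorphic.

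Next, for each $k$, I would build a direct sum $\c{M}_{k}$ realizing the block $\c{G}(H_{k},\set{r_{i,k}}^{n_{k}}_{i=1})$. Choose projections $p_{0,k},p_{1,k},\ldots,p_{n_{k},k}$ in $M_{N}(M_{k})$, with $N$ large enough to accommodate every target ratio, whose traces satisfy $({\rm Tr}_{N}\otimes\tau_{M_{k}})(p_{i,k})/({\rm Tr}_{N}\otimes\tau_{M_{k}})(p_{0,k})=r_{i,k}$, and set $\c{M}_{k}:=\bigoplus_{i=0}^{n_{k}}p_{i,k}M_{N}(M_{k})p_{i,k}$. Each summand is stably isomorphic to $M_{k}$, hence has fundamental group $H_{k}$; the preceding proposition on direct sums of mutually stably-isomorphic ${\rm II}_{1}$-factors then gives $F(\c{M}_{k})=\c{G}(H_{k},\set{r_{i,k}}^{n_{k}}_{i=1})$, with the scaling parameters $r_{i,k}$ determined by the trace ratios through the $S_{ij}$ description of Lemma \ref{lem:multi0}.

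Then I would assemble $\c{M}:=\bigl(\bigoplus_{j=1}^{l}N_{j}\bigr)\oplus\bigl(\bigoplus_{k=1}^{m}\c{M}_{k}\bigr)$ and read off $F(\c{M})$ from its ``stable-isomorphism graph'' on the summands. By Lemma \ref{lem:multi0} the set $S_{ij}$ attached to any pair of summands is nonempty if and only if those summands are stably isomorphic; in that case $G_{i}=G_{j}$ and $S_{ij}$ is a single $G_{i}$-coset. Because the $N_{j}$ are pairwise non-stably-isomorphic and not stably isomorphic to any component of any $\c{M}_{k}$, every $S_{ij}$ that would produce an off-diagonal entry involving an $N_{j}$-index is empty; this forces the upper-left portion of $F(\c{M})$ to be the diagonal collection of the $G_{j}$'s. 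Within each block $\c{M}_{k}$, all summands are stably isomorphic to $M_{k}$, so the direct-sum proposition contributes exactly $\c{G}(H_{k},\set{r_{i,k}}^{n_{k}}_{i=1})$, yielding the prescribed matrix group.

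The main obstacle is the calibration step inside each $\c{M}_{k}$: one must realize the prescribed $r_{i,k}$ \emph{exactly}, not merely up to an $H_{k}$-coset. This is resolved by observing that projection traces in $M_{N}(M_{k})$ sweep out all of $(0,N]$, so any positive real ratio is achievable, and that Lemma \ref{lem:multi0} identifies the off-diagonal entry arising from the bimodule $p_{0,k}M_{N}(M_{k})p_{i,k}$ as exactly that trace ratio. The remaining ingredient, that Popa's factors can be made simultaneously non-stably-isomorphic across the finitely many required building blocks, is precisely the content of the result of Popa recalled in the introduction.
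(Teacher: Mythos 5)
Your proposal is correct and follows essentially the same route as the paper: invoke Popa's result (Corollary 5.5 of \cite{Po}) to obtain pairwise non-stably-isomorphic ${\rm II}_{1}$-factors with the prescribed fundamental groups, cut by projections of prescribed trace to calibrate the parameters $r_{i,k}$, take the direct sum of the resulting corners, and read off $F(\c{M})$ from the earlier structure propositions on direct sums. If anything, you are more explicit than the paper about why the $r_{i,k}$ are realized exactly rather than merely up to an $H_{k}$-coset.
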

\begin{proof}
Let $G_{1},\cdots,G_{k}$ be countable subgroups of $\m{R}^{\times}_{+}$.  
By corollary 5.5 of Popa \cite{Po}, there exists a ${\rm II}_{1}$-factor $\c{M}_{i}$ such that $F(\c{M}_{i})=G_{i}$ and that $\c{M}_{i}$ is not stably isomorphic each other.  
Let $r_{i,j}>0$ arbitrary.  
Then there exists a projection $p_{i,j}$ of $M_{m_{i,j}}(M)$ such that ${\rm Tr}_{m_{i.j}}\otimes\varphi_{i}(p_{i,j})=r_{i,j}$, where $\varphi_{i}$ is a normalized trace on $\c{M}_{i}$.  
Put $\c{M}=\oplus^{k}_{i=1}\oplus^{n_{i}}_{j=1} p_{i,j}M_{m_{i,j}}(M)p_{i,j}$.  
Since $S_{n}$ is generated by $\set{(1\ j):j=2,\cdots,n}$,\\ $F(\c{M})=
\left[ 
 \begin{array}{ccccccc}
 G_{1} & 0      & \cdots & \cdots                                   & \cdots & \cdots & 0      \\
 \vdots& \ddots & \ddots &                                          &        &         & \vdots \\ 
 0     & \cdots & G_{l}  & 0                                        & \cdots & \cdots & 0      \\
 O     & \cdots & O      & \c{G}(H_{1},\set{r_{i,1}}^{n_{1}}_{i=1}) & O      & \cdots & O      \\
 \vdots& \ddots &        & \ddots                                   & \ddots & \ddots & \vdots \\
 \vdots&        & \ddots &                                          & \ddots & \ddots & O      \\
 O     & \cdots & \cdots & O                                        & \cdots & O      &\c{G}(H_{m},\set{r_{i,m}}^{n_{m}}_{i=1}) \\
\end{array} 
\right]$.  
\end{proof}

\begin{ex}
Let $\c{R}$ be a hyperfinite ${\rm II}_{1}$-factor.  
Put $\c{M}=\c{R}\oplus\c{R}$.  
Then $F(\c{M})=\set{\left[ 
 \begin{array}{cc}
 r_{1}  & 0      \\
 0      & r_{2}  \\
\end{array} 
 \right],\left[ 
 \begin{array}{cc}
 0  & r_{3}  \\
 r_{4}  & 0  \\
\end{array} 
 \right];r_{i}\in \m{R}^{\times}_{+}}$.  
\end{ex}

\begin{ex}
Let $\c{R}$ be a hyperfinite ${\rm II}_{1}$-factor 
and let $L(\m{F}_{\infty})$ be a group factor of the free group $\m{F}_{\infty}$.  
Put $\c{M}=\c{R}\oplus L(\m{F}_{\infty})$.  
Then $F(\c{M})=\set{\left[ 
 \begin{array}{cc}
 r_{1}  & 0      \\
 0      & r_{2}  \\
\end{array} 
 \right];r_{i}\in \m{R}^{\times}_{+}}$.  
\end{ex}

\begin{ex}
Put $\c{M}=\oplus^{n}_{i=1} L(\m{Z}^{2}\rtimes SL_{2}(\m{Z}))$.  
Then $F(\c{M})=\set{U_{\sigma}:\sigma \in S_{n}}$.  
In particular, if $n=2$, then $F(\c{M})=\set{\left[ 
 \begin{array}{cc}
 1 & 0  \\
 0 & 1  \\
\end{array} 
 \right],\left[ 
 \begin{array}{cc}
 0  &  1  \\
 1  &  0  \\
\end{array} 
 \right]}$.  
\end{ex}

\section{relation with the fundamental group of $C^{*}$-algebra}
 Let $\c{A}$ be a unital $C^{*}$-algebra with finite dimensional trace space.  
We denote by $T(\c{A})$ the tracial state space of $\c{A}$.  
We say that $T(\c{A})$ is faithful 
if for any positive element $a$ in $\c{A}$, for any $\varphi$ in $T(\c{A})$, $\varphi(a)=0$, then $a=0$.  
Let $\pi_{\varphi}$ be a GNS representation of $\c{A}$ associated with $\varphi$ in $T(\c{A})$ and $\c{H}_{\varphi}$ be a corresponding Hilbert space.\\  
\ For simplicity, we consider the case that dim${\rm lin}_{\m{C}}T(\c{A})=2$.  
Following arguments are similar in the case that dim${\rm lin}_{\m{C}}T(\c{A})>2$.\\  
\ Say $\partial_{e}(T(\c{A}))=\set{\varphi_{1},\ \varphi_{2}}$.  
We define a representation $\pi:\c{A}\rightarrow \c{L}(\c{H}_{\varphi_{1}}\oplus \c{H}_{\varphi_{2}})$ by $\pi(a)=\left[ 
 \begin{array}{cc}
 \pi_{\varphi_{1}}(a) & 0\\
  0 & \pi_{\varphi_{2}}(a) \\
 \end{array} 
 \right]$. 
If $T(\c{A})$ is faithful, then $\pi$ is isometric.  
We denote by $\eta_{\varphi}$ the natural linear map from $\c{A}$ to $\c{H}_{\varphi}$.  
Then $\eta_{\varphi_{1}}(1_{\c{A}})$ and $\eta_{\varphi_{2}}(1_{\c{A}})$ are cyclic vectors of the representation $\pi_{\varphi_{1}}$ and $\pi_{\varphi_{2}}$ respectively.  
Put $\xi=\frac{1}{\sqrt{2}}(\eta_{\varphi_{1}}(1_{\c{A}}),\eta_{\varphi_{2}}(1_{\c{A}}))$ 
and let $\c{H}$ be the closure of $\set{\pi(a)\xi : a\in \c{A}}$.  
\begin{pro}
Let $\c{A}$ be a unital $C^{*}$-algebra with 2-dimensional trace space $\rm{T}(\c{A})$.  
If $T(\c{A})$ is faithful, then $\pi$ is isometric.  
\end{pro}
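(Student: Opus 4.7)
The plan is to reduce ``isometric'' to ``injective'' using the standard fact that any injective $*$-homomorphism between $C^*$-algebras is automatically isometric, and then prove injectivity by exploiting that $T(\c{A})$ is spanned by the two extreme points $\varphi_1,\varphi_2$.

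First I would take any $a\in\c{A}$ with $\pi(a)=0$, so that $\pi_{\varphi_1}(a)=0$ and $\pi_{\varphi_2}(a)=0$ separately. Applying these to the cyclic vectors $\eta_{\varphi_i}(1_{\c{A}})$ and using the defining property of the GNS construction, one obtains
\[
\varphi_i(a^*a)=\bigl\langle \pi_{\varphi_i}(a^*a)\eta_{\varphi_i}(1_{\c{A}}),\,\eta_{\varphi_i}(1_{\c{A}})\bigr\rangle=\bigl\|\pi_{\varphi_i}(a)\eta_{\varphi_i}(1_{\c{A}})\bigr\|^2=0
\]
for $i=1,2$.

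Next I would use the hypothesis that $\dim T(\c{A})=2$ with $\partial_e T(\c{A})=\{\varphi_1,\varphi_2\}$. By the Krein--Milman theorem (or simply convex combination in a $2$-dimensional simplex), every $\varphi\in T(\c{A})$ is of the form $\varphi=t\varphi_1+(1-t)\varphi_2$ for some $t\in[0,1]$. The vanishings above therefore yield $\varphi(a^*a)=0$ for every $\varphi\in T(\c{A})$. Since $a^*a$ is positive and $T(\c{A})$ is assumed faithful, we conclude $a^*a=0$, hence $a=0$. This gives injectivity of $\pi$, and isometry then follows from the $C^*$-algebra fact mentioned above.

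I do not anticipate a real obstacle here: the only subtlety is to make sure the two items are used in the correct order (first GNS gives vanishing on $\varphi_1,\varphi_2$; then the $2$-dimensionality extends vanishing to all of $T(\c{A})$; then faithfulness kills $a^*a$). The same argument evidently generalizes verbatim to the case $\dim\,{\rm lin}_{\m{C}}T(\c{A})=n$ with $\partial_e T(\c{A})=\{\varphi_1,\ldots,\varphi_n\}$, which is the remark the author makes just before the statement.
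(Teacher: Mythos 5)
Your argument is correct and is essentially the paper's own proof: both reduce isometry to injectivity, evaluate $\pi_{\varphi_i}(a)$ on the GNS cyclic vectors to get $\varphi_1(a^*a)=\varphi_2(a^*a)=0$, and invoke faithfulness of $T(\c{A})$. The only difference is that you spell out two steps the paper leaves implicit (extending the vanishing to all convex combinations before applying faithfulness, and the fact that an injective $*$-homomorphism is isometric), which is a welcome clarification but not a different route.
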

\begin{proof}
We suppose $\pi(a)=0$.  
Since $\pi(a)\xi=0$, $\eta_{1}(a)=0$ and $\eta_{2}(a)=0$.  
Therefore $\varphi_{1}(a^{*}a)=\varphi_{2}(a^{*}a)=0$.  
Since $T(\c{A})$ is faithful $a^{*}a=0$.  
\end{proof}
Let $\hat \pi:\c{A}\rightarrow \c{L}(\c{H})$ be the restriction of $\pi$.  
\begin{pro}\label{pro:uni}
Put $\varphi=\cfrac{\varphi_{1}+\varphi_{2}}{2}$.  
Then $\pi_{\varphi}$ and ${\hat \pi}$ is unitary equivalent.  
\end{pro}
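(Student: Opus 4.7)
The plan is to apply the standard uniqueness of the GNS construction: a cyclic representation $(\rho, \mathcal{K}, \xi_0)$ of $\mathcal{A}$ such that $\langle \rho(a)\xi_0, \xi_0\rangle = \varphi(a)$ is unitarily equivalent to $(\pi_\varphi, \mathcal{H}_\varphi, \eta_\varphi(1_\mathcal{A}))$. So all I need to verify is that $\hat\pi$ on $\mathcal{H}$ is cyclic with cyclic vector $\xi$, and that $\xi$ implements the state $\varphi$.

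First I would observe that cyclicity is immediate from the definition: $\mathcal{H}$ was defined as the closure of $\{\pi(a)\xi : a\in\mathcal{A}\}$, so $\xi$ is cyclic for $\hat\pi$. Next I would compute, for any $a\in\mathcal{A}$,
\begin{eqnarray}
\langle \hat\pi(a)\xi, \xi\rangle &=& \tfrac{1}{2}\langle \pi_{\varphi_1}(a)\eta_{\varphi_1}(1_\mathcal{A}), \eta_{\varphi_1}(1_\mathcal{A})\rangle + \tfrac{1}{2}\langle \pi_{\varphi_2}(a)\eta_{\varphi_2}(1_\mathcal{A}), \eta_{\varphi_2}(1_\mathcal{A})\rangle \nonumber \\
&=& \tfrac{1}{2}(\varphi_1(a) + \varphi_2(a)) = \varphi(a), \nonumber
\end{eqnarray}
where the block-diagonal form of $\pi$ is used in the first equality and the defining property of the GNS vectors $\eta_{\varphi_i}(1_\mathcal{A})$ is used in the second. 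The cross terms vanish because $\pi$ is block diagonal with respect to the decomposition $\mathcal{H}_{\varphi_1}\oplus\mathcal{H}_{\varphi_2}$.

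Then I would define $U:\mathcal{H}_\varphi\to\mathcal{H}$ on the dense subspace $\pi_\varphi(\mathcal{A})\eta_\varphi(1_\mathcal{A})$ by $U\pi_\varphi(a)\eta_\varphi(1_\mathcal{A}) = \hat\pi(a)\xi$. The key identity
\[ \|\hat\pi(a)\xi\|^{2} = \langle \hat\pi(a^*a)\xi, \xi\rangle = \varphi(a^*a) = \|\pi_\varphi(a)\eta_\varphi(1_\mathcal{A})\|^{2} \]
shows $U$ is a well-defined isometry; cyclicity on both sides makes it a surjective unitary; and $U\pi_\varphi(b)\pi_\varphi(a)\eta_\varphi(1_\mathcal{A}) = \hat\pi(ba)\xi = \hat\pi(b)U\pi_\varphi(a)\eta_\varphi(1_\mathcal{A})$ gives the intertwining $U\pi_\varphi(b) = \hat\pi(b)U$.

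There is no real obstacle here; the argument is the standard uniqueness theorem for cyclic $*$-representations implementing a given state. The only thing requiring attention is keeping the factor of $\tfrac{1}{2}$ in $\xi = \tfrac{1}{\sqrt 2}(\eta_{\varphi_1}(1_\mathcal{A}), \eta_{\varphi_2}(1_\mathcal{A}))$ consistent with the convex combination $\varphi = \tfrac{1}{2}(\varphi_1 + \varphi_2)$, which is exactly why the $\tfrac{1}{\sqrt 2}$ normalization was chosen.
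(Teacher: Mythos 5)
Your proof is correct and is essentially the paper's own argument: the paper's proof consists of the single observation that $\varphi(a)=\braket{\hat\pi(a)\xi,\xi}_{\c{H}}$, leaving the appeal to GNS uniqueness implicit, and you have simply written out that computation and the standard unitary explicitly. No difference in approach, only in level of detail.
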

\begin{proof}
Since $\varphi(a)=\braket{\hat\pi (a)\xi,\xi}_{\c{H}}$, as follows.  
\end{proof}

Let $\c{A}$ be a unital $C^{*}$-algebra and $\varphi$ be a non-zero element of $T(\c{A})$.  
We denote by $\overline{\pi_{\varphi}(\c{A})}^{w} $ the weak-closure of $\pi_{\varphi}(\c{A})$ in $\c{L}(\c{H}_{\varphi})$.  

\begin{pro}\label{pro:disjoint}
Let $\c{A}$ be a unital $C^{*}$-algebra with 2-dimensional trace space.  
We suppose $T(\c{A})$ is faithful.  
Say $\partial_{e}T(\c{A}))=\set{\varphi_{1}, \varphi_{2}}$.  
Put $\varphi=\cfrac{1}{2}(\varphi_{1}+\varphi_{2})$.  
Then $\overline{\pi_{\varphi}(\c{A})}^{w}$ is unitary equivalent to $\overline{\pi_{\varphi_{1}}(\c{A})}^{w}\oplus \overline{\pi_{\varphi_{2}}(\c{A})}^{w}$.  
\end{pro}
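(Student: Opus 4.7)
My plan is to use Proposition \ref{pro:uni} to identify $\overline{\pi_{\varphi}(\c{A})}^{w}$ with the weak closure of $\hat\pi(\c{A})$, where $\hat\pi$ is the restriction of $\pi:=\pi_{\varphi_{1}}\oplus \pi_{\varphi_{2}}$ to the cyclic subspace $\c{H}$. I would then show both that $\c{H}$ is all of $\c{H}_{\varphi_{1}}\oplus \c{H}_{\varphi_{2}}$ and that the resulting weak closure splits as a direct sum, driving both assertions by disjointness of $\pi_{\varphi_{1}}$ and $\pi_{\varphi_{2}}$.

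First I would recall the standard fact that, because $\varphi_{i}$ is an extreme point of $T(\c{A})$, the algebra $M_{i}:=\overline{\pi_{\varphi_{i}}(\c{A})}^{w}$ is a factor carrying a unique normal tracial state $\tau_{i}$ with $\tau_{i}\circ \pi_{\varphi_{i}}=\varphi_{i}$. Next I would establish disjointness of $\pi_{\varphi_{1}}$ and $\pi_{\varphi_{2}}$: if they were quasi-equivalent there would exist a ${}^{*}$-isomorphism $\theta:M_{1}\to M_{2}$ with $\theta\circ \pi_{\varphi_{1}}=\pi_{\varphi_{2}}$, and then $\tau_{2}\circ \theta$ would be a normal trace on $M_{1}$, hence equal to $\tau_{1}$ by uniqueness; composing with $\pi_{\varphi_{1}}$ would give $\varphi_{1}=\varphi_{2}$, contradicting distinctness. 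Two factor representations that are not quasi-equivalent must be disjoint, so disjointness follows.

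Second, I would exploit disjointness to place the projections $e_{i}$ of $\c{H}_{\varphi_{1}}\oplus \c{H}_{\varphi_{2}}$ onto $\c{H}_{\varphi_{i}}$ into $\overline{\pi(\c{A})}^{w}$. A priori $e_{i}$ lies in $\pi(\c{A})'$; disjointness upgrades this to $e_{i}\in \overline{\pi(\c{A})}^{w}$, and in fact $e_{i}$ is central there, so $\overline{\pi(\c{A})}^{w}=e_{1}\overline{\pi(\c{A})}^{w}e_{1}\oplus e_{2}\overline{\pi(\c{A})}^{w}e_{2}=M_{1}\oplus M_{2}$. By Kaplansky density, bounded nets in $\pi(\c{A})$ converge strongly to each $e_{i}$, so $e_{i}\xi\in \c{H}$. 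Since $e_{1}\xi$ is a scalar multiple of $\eta_{\varphi_{1}}(1_{\c{A}})$, applying $\pi(\c{A})$ to $e_{1}\xi$ yields the dense subset $\pi_{\varphi_{1}}(\c{A})\eta_{\varphi_{1}}(1_{\c{A}})$ of $\c{H}_{\varphi_{1}}$ inside $\c{H}$; the symmetric argument places $\c{H}_{\varphi_{2}}$ in $\c{H}$, so $\c{H}=\c{H}_{\varphi_{1}}\oplus \c{H}_{\varphi_{2}}$ and $\hat\pi=\pi$.

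Combining these steps with Proposition \ref{pro:uni}, I would conclude that $\overline{\pi_{\varphi}(\c{A})}^{w}$ is unitarily equivalent to $\overline{\pi(\c{A})}^{w}=M_{1}\oplus M_{2}$. The main obstacle is the disjointness step: it hinges on the two non-trivial facts that extreme tracial states yield factorial GNS representations and that factor representations are either quasi-equivalent or disjoint. Once those are in hand the remainder is routine bookkeeping with Kaplansky density.
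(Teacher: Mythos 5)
Your proposal is correct and follows essentially the same route as the paper: establish that $\pi_{\varphi_{1}}$ and $\pi_{\varphi_{2}}$ are disjoint (the paper invokes the quasi-equivalent-or-disjoint dichotomy and dismisses quasi-equivalence as contradicting extremality, which is exactly the argument you flesh out via factoriality and uniqueness of the normal trace), deduce that the weak closure of $\pi_{\varphi_{1}}\oplus\pi_{\varphi_{2}}$ splits as $\overline{\pi_{\varphi_{1}}(\c{A})}^{w}\oplus\overline{\pi_{\varphi_{2}}(\c{A})}^{w}$, and conclude with Proposition \ref{pro:uni}. Your additional Kaplansky-density step showing $\c{H}=\c{H}_{\varphi_{1}}\oplus\c{H}_{\varphi_{2}}$ is a detail the paper leaves implicit, not a different method.
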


\begin{proof}
By \cite{Bla}, $\pi_{\varphi_{1}}$ and $\pi_{\varphi_{2}}$ are quasi-equivalent or disjoint.  
If $\pi_{\varphi_{1}}$ and $\pi_{\varphi_{2}}$ are quasi-equivalent, then there exists an isomorphism $\alpha:\overline{\pi_{\varphi_{1}}(\c{A})}^{w}\rightarrow \overline{\pi_{\varphi_{2}}(\c{A})}^{w}$ such that ${\hat \pi}(\c{A})=\set{\left[ 
 \begin{array}{cc}
 a & 0\\
  0 & \alpha(a) \\
 \end{array} 
 \right]\mid a\in \overline{\pi_{\varphi_{1}}(\c{A})}^{w}}$.  
This contradicts to the fact that $\varphi_{1}$ and $\varphi_{2}$ are extremal points of $T(\c{A})$.  
Then $\pi_{\varphi_{1}}$ and $\pi_{\varphi_{2}}$ are disjoint.  
Therefore $ \overline{{\hat \pi}(\c{A})}^{w}=\overline{\pi_{\varphi_{1}}(\c{A})}^{w}\oplus \overline{\pi_{\varphi_{2}}(\c{A})}^{w}$. 
By \ref{pro:uni}, $\overline{\pi_{\varphi}(\c{A})}^{w}$ is unitary isomorphic to $\overline{\pi_{\varphi_{1}}(\c{A})}^{w}\oplus \overline{\pi_{\varphi_{2}}(\c{A})}^{w}$.  
\end{proof}

\begin{pro}
Let $\c{A}$ be a unital $C^{*}$-algebra with 2-dimensional trace space.  
We suppose $T(\c{A})$ is faithful.  
Say $\partial_{e}T(\c{A})=\set{\varphi_{1}, \varphi_{2}}$.  
Put $\varphi=\cfrac{1}{2}(\varphi_{1}+\varphi_{2})$.  
If $p$ is a self-similar full projection in $M_{n}(\c{A})$, then $\pi_{\varphi}(p)$ in $M_{n}(\overline{\pi_{\varphi}(\c{A})}^{w})$  is a self-similar full projection.  
\end{pro}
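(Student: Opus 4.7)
The proof reduces to two things about $q := \pi_\varphi^{(n)}(p) \in M_n(\c{M})$, writing $\c{M} := \overline{\pi_\varphi(\c{A})}^w$ and $\pi_\varphi^{(n)}$ for the entry-wise amplification of $\pi_\varphi$ to $M_n(\c{A})$: (a) $q$ is full in $M_n(\c{M})$, and (b) there exists a $*$-isomorphism $\c{M} \to qM_n(\c{M})q$. Fullness is the easier piece. Since $p$ is full in $M_n(\c{A})$, as in Lemma \ref{lem:full} we may write $1_{M_n(\c{A})} = \sum_j a_j p b_j$ for some $a_j, b_j \in M_n(\c{A})$, and applying the unital amplified representation $\pi_\varphi^{(n)}$ gives $\sum_j \pi_\varphi^{(n)}(a_j)\, q\, \pi_\varphi^{(n)}(b_j) = 1_{M_n(\c{M})}$, so $q$ is full.

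For the self-similarity, I would invoke Proposition \ref{pro:disjoint} to split $\c{M} = \c{M}_1 \oplus \c{M}_2$ where $\c{M}_i := \overline{\pi_{\varphi_i}(\c{A})}^w$ is a finite factor. Correspondingly $\pi_\varphi^{(n)} = \pi_{\varphi_1}^{(n)} \oplus \pi_{\varphi_2}^{(n)}$, $q = q_1 \oplus q_2$ with $q_i := \pi_{\varphi_i}^{(n)}(p)$, and $qM_n(\c{M})q = q_1 M_n(\c{M}_1) q_1 \oplus q_2 M_n(\c{M}_2) q_2$. For each $i$, set $\rho_i := \pi_{\varphi_i}^{(n)} \circ \Phi : \c{A} \to q_i M_n(\c{L}(\c{H}_{\varphi_i})) q_i$. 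Since multiplication by the projection $q_i$ is weakly continuous on bounded sets, the weak closure of $\rho_i(\c{A}) = q_i \pi_{\varphi_i}^{(n)}(M_n(\c{A})) q_i$ is the finite factor $q_i M_n(\c{M}_i) q_i$; in particular $\rho_i$ is a factor representation of $\c{A}$. The unique normal tracial state on $q_i M_n(\c{M}_i) q_i$ pulls back along $\rho_i$ to $\lambda_i^{-1}({\rm Tr}_n \otimes \varphi_i) \circ \Phi$, with $\lambda_i := ({\rm Tr}_n \otimes \varphi_i)(p) > 0$. By Lemma \ref{lem:multi0}, applied to $\c{A}$ as in the remark following it, $({\rm Tr}_n \otimes \varphi_i) \circ \Phi = \lambda_i \varphi_{\sigma(i)}$ for some permutation $\sigma \in S_2$, so the pulled-back trace is precisely $\varphi_{\sigma(i)}$.

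Thus $\rho_i$ and $\pi_{\varphi_{\sigma(i)}}$ are two factor representations of $\c{A}$ inducing the same extremal tracial state. The quasi-equivalence-or-disjointness dichotomy used in Proposition \ref{pro:disjoint} then forces them to be quasi-equivalent, since disjoint factor representations cannot share an inducing state. This produces a $*$-isomorphism $q_i M_n(\c{M}_i) q_i \cong \c{M}_{\sigma(i)}$, and assembling the two summands yields $qM_n(\c{M})q \cong \c{M}_{\sigma(1)} \oplus \c{M}_{\sigma(2)} \cong \c{M}_1 \oplus \c{M}_2 = \c{M}$, as required.

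The main obstacle I expect is the final quasi-equivalence step: passing from the equality of induced traces on $\c{A}$ to an actual $*$-isomorphism of weak closures. This is not formal; one really does need the fact that a finite factor has a unique normal tracial state, so that its standard form is determined by the trace composed with a weakly dense factor representation. By contrast, the fullness transfer and the direct-sum decomposition of $qM_n(\c{M})q$ are routine bookkeeping.
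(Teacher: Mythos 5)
Your proof is correct and follows the same route as the paper's: show that $\Phi$ permutes the extremal traces up to positive scalars, decompose $\overline{\pi_{\varphi}(\c{A})}^{w}$ via Proposition \ref{pro:disjoint}, and transfer the self-similarity to the corners. The paper's own proof essentially stops after the first two steps, so your quasi-equivalence argument identifying $q_{i}M_{n}(\c{M}_{i})q_{i}$ with $\c{M}_{\sigma(i)}$, together with the fullness check, supplies exactly the details it leaves implicit.
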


\begin{proof}
Let $p$ be a self-similar full projection of $M_{n}(\c{A})$ and let $\Phi$ be an isomorphism from $\c{A}$ onto $pM_{n}(\c{A})p$.  
Then $({\rm Tr}_{n}\otimes \varphi_{1})\circ \Phi=\varphi_{1}$ and $({\rm Tr}_{n}\otimes \varphi_{2})\circ \Phi=\varphi_{2}$ or $({\rm Tr}_{n}\otimes \varphi_{1})\circ \Phi=\varphi_{2}$ and $({\rm Tr}_{n}\otimes \varphi_{2})\circ \Phi=\varphi_{1}$.  
By \ref{pro:disjoint}, we consider the weak topology convergence of $M_{n}(\overline{\pi_{\varphi}(\c{A}})$ as that of $\overline{\pi_{\varphi_{1}}(\c{A})}^{w}\oplus \overline{\pi_{\varphi_{2}}(\c{A})}^{w}$.  
Then $\pi_{\varphi}(p)$ is a self-similar projection.    
\end{proof}

Hence
\begin{pro}
Let $\c{A}$ be a unital $C^{*}$-algebra with 2-dimensional trace space.  
We suppose $T(\c{A})$ is faithful.  
Then $F(\c{A})\subset F(\overline{\pi_{\varphi}(\c{A})}^{w})$.  
\end{pro}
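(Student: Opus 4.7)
The plan is, given $X\in F(\c{A})$, to lift the underlying self-similar triple through $\pi_{\varphi}$ to a self-similar triple for $\overline{\pi_{\varphi}(\c{A})}^{w}$ and to show that the induced matrix on the trace space is unchanged.

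First I would choose a self-similar triple $(k,p,\Phi)$ with $X=S_{\c{A}}(T_{(k,p,\Phi)})$. By the previous proposition, $\pi_{\varphi}(p)$ is a self-similar full projection of $M_{k}(\overline{\pi_{\varphi}(\c{A})}^{w})$. I would identify the accompanying isomorphism $\tilde\Phi:\overline{\pi_{\varphi}(\c{A})}^{w}\to \pi_{\varphi}(p)M_{k}(\overline{\pi_{\varphi}(\c{A})}^{w})\pi_{\varphi}(p)$ with the unique normal extension of $(\pi_{\varphi}\otimes{\rm id}_{M_{k}})\circ \Phi\circ \pi_{\varphi}^{-1}:\pi_{\varphi}(\c{A})\to M_{k}(\pi_{\varphi}(\c{A}))$. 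Using the direct sum splitting $\overline{\pi_{\varphi}(\c{A})}^{w}=\overline{\pi_{\varphi_{1}}(\c{A})}^{w}\oplus \overline{\pi_{\varphi_{2}}(\c{A})}^{w}$ from \ref{pro:disjoint}, this extension can be built factor-by-factor: the functional $({\rm Tr}_{k}\otimes \varphi_{i})\circ \Phi$, being a bounded trace on $\c{A}$, is a positive combination of $\varphi_{1}$ and $\varphi_{2}$, so $\Phi$ pulls back normal tracial functionals to normal tracial functionals.

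With $\tilde\Phi$ in hand, $(k,\pi_{\varphi}(p),\tilde\Phi)$ is a self-similar triple for $\overline{\pi_{\varphi}(\c{A})}^{w}$ and yields $\tilde T=T_{(k,\pi_{\varphi}(p),\tilde\Phi)}\in F^{tr}(\overline{\pi_{\varphi}(\c{A})}^{w})$. Let $\tau_{1},\tau_{2}$ be the extreme tracial states of $\overline{\pi_{\varphi}(\c{A})}^{w}$ characterised by $\tau_{i}\circ \pi_{\varphi}=\varphi_{i}$, and write $T_{(k,p,\Phi)}(\varphi_{i})=\sum_{j}X_{ij}\varphi_{j}$. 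For $a\in \c{A}$ I would compute
\begin{equation*}
({\rm Tr}_{k}\otimes \tau_{i})(\tilde\Phi(\pi_{\varphi}(a)))=({\rm Tr}_{k}\otimes \varphi_{i})(\Phi(a))=\sum_{j}X_{ij}\varphi_{j}(a)=\sum_{j}X_{ij}\tau_{j}(\pi_{\varphi}(a)),
\end{equation*}
and use normality of $\tilde\Phi$ together with each $\tau_{j}$, plus the weak density of $\pi_{\varphi}(\c{A})$ in $\overline{\pi_{\varphi}(\c{A})}^{w}$, to promote this to the equality $({\rm Tr}_{k}\otimes \tau_{i})\circ \tilde\Phi=\sum_{j}X_{ij}\tau_{j}$ on all of $\overline{\pi_{\varphi}(\c{A})}^{w}$. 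Hence $S_{\overline{\pi_{\varphi}(\c{A})}^{w}}(\tilde T)=X$, and $X\in F(\overline{\pi_{\varphi}(\c{A})}^{w})$.

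The main obstacle is the construction of $\tilde\Phi$ as a \emph{bijective} normal $*$-homomorphism onto the corner rather than merely a normal $*$-homomorphism extending the C*-level data. Injectivity relies on the disjointness of $\pi_{\varphi_{1}}$ and $\pi_{\varphi_{2}}$ from \ref{pro:disjoint} together with the fact that $\Phi$ sends each extreme trace back to a positive multiple of an extreme trace, while surjectivity onto $\pi_{\varphi}(p)M_{k}(\overline{\pi_{\varphi}(\c{A})}^{w})\pi_{\varphi}(p)$ follows from normality applied to the weakly dense image $\pi_{\varphi}(p)M_{k}(\pi_{\varphi}(\c{A}))\pi_{\varphi}(p)$ of the C*-isomorphism $(\pi_{\varphi}\otimes {\rm id}_{M_{k}})\circ \Phi$.
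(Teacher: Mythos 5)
Your proposal is correct and follows essentially the same route as the paper, which simply writes ``Hence'' and lets the statement follow from the preceding proposition that $\pi_{\varphi}(p)$ is again a self-similar full projection of $M_{n}(\overline{\pi_{\varphi}(\c{A})}^{w})$, via the decomposition of Proposition \ref{pro:disjoint}. You supply the details the paper leaves implicit (the normal extension $\tilde\Phi$ and the verification that the induced matrix on the trace space is unchanged), but the underlying argument is the same.
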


\end{document}